\newcommand\cyr
\renewcommand\rmdefault{wncyr}
\renewcommand\sfdefault{wncyss}
\renewcommand\encodingdefault{OT2}
\DeclareTextFontCommand{\textcyr}{\cyr}
\newtheorem{theorem}{Theorem}[section]
\newtheorem{lemma}[theorem]{Lemma}
\newtheorem{corollary}[theorem]{Corollary}
\theoremstyle{definition}
\newtheorem{definition}[theorem]{Definition}
\newtheorem{conjecture}[theorem]{Conjecture}
\theoremstyle{remark}
\newtheorem{remark}[theorem]{Remark}
\numberwithin{equation}{section}
\begin{document}
\setcounter{page}{1}

\title[%Galois extensions
non-abelian class field theory]{%On generators of the Galois extensions
Non-abelian class field theory and higher dimensional noncommutative tori}

\author[Nikolaev]
{Igor V. Nikolaev$^1$}

\address{$^{1}$ Department of Mathematics and Computer Science, St.~John's University, 8000 Utopia Parkway,  
New York,  NY 11439, United States.}
\email{\textcolor[rgb]{0.00,0.00,0.84}{igor.v.nikolaev@gmail.com}}

%\dedicatory{In memory of Ola Bratteli}

\subjclass[2010]{Primary 11G09; Secondary 46L85.}

\keywords{Drinfeld modules, noncommutative tori.}

%\date{Received:  August 14, 2015; Revised: yyyyyy; Accepted: zzzzzz.}

\begin{abstract}
We study  a relation between the Drinfeld modules and the even dimensional 
noncommutative tori. A non-abelian class field theory is developed based on 
this relation. Explicit generators of the Galois extensions are constructed.   
\end{abstract}

\maketitle

%**************************************************************************
\section{Introduction}
%***************************************************************************
The class field theory studies abelian extensions of the number fields. 
Such a theory devoid of the $L$-functions is due to Claude Chevalley. 
The non-abelian class field theory is looking for a generalization 
to  the  extensions with non-commutative Galois groups $G$. The Artin $L$-function
associated to the group $G$ was a step forward to solve the problem  [Artin 1924] \cite{Art1}. 
The influential Langlands Program predicts  an analytic solution based on the $L$-functions
associated to the irreducible representations of the algebraic groups over adeles
[Langlands 1978] \cite{Lan1}.  Little is known about the non-abelian class field theory  
in the spirit of Chevalley.

The class field theory for function fields is due to  [Carlitz 1938] \cite{Car1} and  [Ore 1933] \cite{Ore1}.
The non-abelian counterpart was developed  by Drinfeld  in terms of  elliptic modules [Drinfeld 1974] \cite{Dri1}. 
The latter are now called the Drinfeld modules. 
It is remarkable that in both cases the generators of the Galois extensions are constructed 
explicitly, {\it ibid.} 

The aim of our note is a non-abelian class field theory for number fields 
based on  a representation of the Drinfeld modules   by the $C^*$-algebras of 
bounded linear operators on a Hilbert space. 
Such algebras,  known as noncommutative tori, classify the 
Drinfeld modules and therefore can be used in a non-abelian class field theory
for number fields (Theorem \ref{thm1.1}).  Moreover, the generators of such extensions
 are given explicitly (Corollary \ref{cor1.2}). 
This fact  explains a phenomenon  observed in \cite{Nik3}.

The paper is organized as follows.  A brief review of the preliminary facts is 
given in Section 2. Main results Theorem \ref{thm1.1} and Corollary \ref{cor1.2}  are 
formulated in Section 3.  They are proved in Section 4.
The Langlands reciprocity for the noncommutative tori  
is discussed in Section 5.

%**************************************************************************
\section{Preliminaries}
%***************************************************************************
We briefly review the noncommutative tori,  semigroup and cluster $C^*$-algebras, and Drinfeld modules.
We refer the reader to [Rieffel 1990] \cite{Rie1},  
[Li 2017] \cite{Li1},  \cite{Nik2},  \cite[Section 1.1]{N} and [Rosen 2002] \cite[Chapters 12 \& 13]{R} 
for a detailed exposition.  

%**************************************************************************
\subsection{Noncommutative tori}
%***************************************************************************
%**************************************************************************
\subsubsection{$C^*$-algebras}
%***************************************************************************
The $C^*$-algebra is an algebra  $\mathscr{A}$ over $\mathbf{C}$ with a norm 
$a\mapsto ||a||$ and an involution $\{a\mapsto a^* ~|~ a\in \mathscr{A}\}$  such that $\mathscr{A}$ is
complete with  respect to the norm, and such that $||ab||\le ||a||~||b||$ and $||a^*a||=||a||^2$ for every  $a,b\in \mathscr{A}$.  
Each commutative $C^*$-algebra is  isomorphic
to the algebra $C_0(X)$ of continuous complex-valued
functions on some locally compact Hausdorff space $X$. 
Any other  algebra $\mathscr{A}$ can be thought of as  a noncommutative  
topological space. 

%**************************************************************************
\subsubsection{K-theory of $C^*$-algebras}
%**********************************************************************
By $M_{\infty}(\mathscr{A})$ 
one understands the algebraic direct limit of the $C^*$-algebras 
$M_n(\mathscr{A})$ under the embeddings $a\mapsto ~\mathbf{diag} (a,0)$. 
The direct limit $M_{\infty}(\mathscr{A})$  can be thought of as the $C^*$-algebra 
of infinite-dimensional matrices whose entries are all zero except for a finite number of the
non-zero entries taken from the $C^*$-algebra $\mathscr{A}$.
Two projections $p,q\in M_{\infty}(\mathscr{A})$ are equivalent, if there exists 
an element $v\in M_{\infty}(\mathscr{A})$,  such that $p=v^*v$ and $q=vv^*$. 
The equivalence class of projection $p$ is denoted by $[p]$.   
We write $V(\mathscr{A})$ to denote all equivalence classes of 
projections in the $C^*$-algebra $M_{\infty}(\mathscr{A})$, i.e.
$V(\mathscr{A}):=\{[p] ~:~ p=p^*=p^2\in M_{\infty}(\mathscr{A})\}$. 
The set $V(\mathscr{A})$ has the natural structure of an abelian 
semi-group with the addition operation defined by the formula 
$[p]+[q]:=\mathbf{diag}(p,q)=[p'\oplus q']$, where $p'\sim p, ~q'\sim q$ 
and $p'\perp q'$.  The identity of the semi-group $V(\mathscr{A})$ 
is given by $[0]$, where $0$ is the zero projection. 
By the $K_0$-group $K_0(\mathscr{A})$ of the unital $C^*$-algebra $\mathscr{A}$
one understands the Grothendieck group of the abelian semi-group
$V(\mathscr{A})$, i.e. a completion of $V(\mathscr{A})$ by the formal elements
$[p]-[q]$.  The image of $V(\mathscr{A})$ in  $K_0(\mathscr{A})$ 
is a positive cone $K_0^+(\mathscr{A})$ defining  the order structure $\le$  on the  
abelian group  $K_0(\mathscr{A})$. The pair   $\left(K_0(\mathscr{A}),  K_0^+(\mathscr{A})\right)$
is known as a dimension group of the $C^*$-algebra $\mathscr{A}$.

%**************************************************************************
\subsubsection{Noncommutative tori}
%**********************************************************************
The $m$-dimensional noncommutative torus $\mathscr{A}_{\Theta}^m$ is the
universal $C^*$-algebra  generated by unitary operators $u_1,\dots, u_m$
satisfying the commutation relations 
%******************************************************************
\begin{equation}\label{eq1.1}
u_ju_i=e^{2\pi i \theta_{ij}} u_iu_j, \quad 1\le i,j\le m
\end{equation}
%*****************************************************************
for a skew-symmetric matrix  $\Theta=(\theta_{ij})\in M_m(\mathbf{R})$
[Rieffel 1990] \cite{Rie1}. 
 It is known that 
  $K_0(\mathscr{A}_{\Theta}^m)\cong K_1(\mathscr{A}_{\Theta}^m)\cong \mathbf{Z}^{2^{m-1}}$.
The canonical trace $\tau$ on the $C^*$-algebra
$\mathscr{A}_{\Theta}^m$ defines a homomorphism from 
$K_0(\mathscr{A}_{\Theta}^m)$ to the real line $\mathbf{R}$;
under the homomorphism, the image of $K_0(\mathscr{A}_{\Theta}^m)$
is a $\mathbf{Z}$-module, whose generators $\tau=(\tau_i)$ are polynomials 
in $\theta_{ij}$.  The noncommutative
tori  $\mathscr{A}_{\Theta}^m$ and $\mathscr{A}_{\Theta'}^m$ are Morita
equivalent,  if  the matrices $\Theta$ and $\Theta'$
belong to the same orbit of a subgroup $SO(m,m~|~\mathbf{Z})$ of the
group $GL_{2m}(\mathbf{Z})$, which acts on $\Theta$ by the formula
$\Theta'=(A\Theta+B)~/~(C\Theta+D)$, where $(A, B,  C,  D)\in GL_{2m}(\mathbf{Z})$
and  the matrices $A,B,C,D\in GL_m(\mathbf{Z})$ satisfy the conditions
%*******************************************************************
%\begin{equation}%\label{eq1}
$A^tD+C^tB=I,\quad A^tC+C^tA=0=B^tD+D^tB$,
%\end{equation}
%********************************************************************  
where $I$ is the unit matrix and $t$ at the upper right of a matrix 
means a transpose of the matrix.)  
The group $SO(m, m ~| ~\mathbf{Z})$ can be equivalently defined as a
subgroup of the group  $SO(m, m ~| ~\mathbf{R})$ consisting of linear transformations 
of the space $\mathbf{R}^{2m}$,  which 
preserve the quadratic form $x_1x_{m+1}+x_2x_{k+2}+\dots+x_kx_{2m}$.

%**************************************************************************
\subsubsection{AF-algebras}
%**********************************************************************
An {\it AF-algebra}  (Approximately Finite-dimensional $C^*$-algebra) is defined to
be the  norm closure of an ascending sequence of   finite dimensional
$C^*$-algebras $M_n$,  where  $M_n$ is the $C^*$-algebra of the $n\times n$ matrices
with entries in $\mathbf{C}$. Here the index $n=(n_1,\dots,n_k)$ represents
the  semi-simple matrix algebra  $M_i=M_{i_1}\oplus\dots\oplus M_{i_k}$.
The ascending sequence mentioned above  can be written as 
%***********************************************************
%\begin{equation}%\label{eq2.2}
\displaymath
M_{i_1}\buildrel\rm\varphi_1\over\longrightarrow M_{i_2}
   \buildrel\rm\varphi_2\over\longrightarrow\dots,
 \enddisplaymath  
%\end{equation}
%****************************************************
where $M_{i_k}$ are the finite dimensional $C^*$-algebras and
$\varphi_i$ the homomorphisms between such algebras.  
If $\varphi_i=Const$, then the AF-algebra $\mathscr{A}$ is called 
{\it stationary}. 
The homomorphisms $\varphi_i$ can be arranged into  a graph as follows. 
Let  $M_i=M_{i_1}\oplus\dots\oplus M_{i_k}$ and 
$M_{i'}=M_{i_1'}\oplus\dots\oplus M_{i_k'}$ be 
the semi-simple $C^*$-algebras and $\varphi_i: M_i\to M_{i'}$ the  homomorphism. 
One has  two sets of vertices $V_{i_1},\dots, V_{i_k}$ and $V_{i_1'},\dots, V_{i_k'}$
joined by  $a_{rs}$ edges  whenever the summand $M_{i_r}$ contains $a_{rs}$
copies of the summand $M_{i_s'}$ under the embedding $\varphi_i$. 
As $i$ varies, one obtains an infinite graph called the   Bratteli diagram of the
AF-algebra.  The matrix $A=(a_{rs})$ is known as  a  partial multiplicity matrix;
an infinite sequence of $A_i$ defines a unique AF-algebra.
If   $\mathbb{A}$ is a stationary AF-algebra, then   $A_i=Const$
for all $i\ge 1$.  
The  dimension group $\left(K_0(\mathbb{A}),  K_0^+(\mathbb{A})\right)$  is a complete invariant of the
Morita equivalence class of the AF-algebra $\mathbb{A}$, see e.g. \cite[Theorem 3.5.2]{N}.

%**************************************************************************
\subsubsection{Semigroup $C^*$-algebras}
%**********************************************************************
Let $G$ be a semigroup. We assume that $G$ is left cancellative, i.e. for all 
$g,x,y\in G$ the equality  $gx=gy$ implies $x=y$. In other words, the map $G\to G$ 
given by left multiplication is injective for all $g\in G$. 
Consider the left regular representation of $G$. Namely, let $\ell^2G$ be 
the Hilbert space with the canonical orthonormal basis $\{\delta_x : x\in G\}$,
where $\delta_x$ is the delta-function. For every $g\in G$ the map 
$G\to G$ given by the formula $x\mapsto gx$ is injective, so that the 
mapping $\delta_x\mapsto \delta_{gx}$ extends to an isometry 
$V_g:   \ell^2G\to \ell^2G$. In other words, the assignment $g\mapsto V_g$ 
represents $G$ as isometries of the space  $\ell^2G$. 
%******************************************************
\begin{definition}\label{dfn2.1}
The semigroup $C^*$-algebra $C^*(G)$ is the smallest subalgebra of bounded 
linear operators on the Hilbert space  $\ell^2G$ containing all $\{V_g : g\in G\}$.
In other words,
%**********************************************
%\begin{equation}
\displaymath
C^*(G):= C^*\left(\{V_g : g\in G\}\right). 
\enddisplaymath
%\end{equation}
%***********************************************
\end{definition}
%*******************************************************

%**************************************************************************
\subsection{Cluster $C^*$-algebras}
%***************************************************************************
The cluster algebra  of rank $n$ 
is a subring  $\mathcal{A}(\mathbf{x}, B)$  of the field  of  rational functions in $n$ variables
depending  on  variables  $\mathbf{x}=(x_1,\dots, x_n)$
and a skew-symmetric matrix  $B=(b_{ij})\in M_n(\mathbf{Z})$.
The pair  $(\mathbf{x}, B)$ is called a  seed.
A new cluster $\mathbf{x}'=(x_1,\dots,x_k',\dots,  x_n)$ and a new
skew-symmetric matrix $B'=(b_{ij}')$ is obtained from 
$(\mathbf{x}, B)$ by the   exchange relations [Williams 2014]  \cite[Definition 2.22]{Wil1}:
%*********************************************************************************************
\begin{eqnarray}%\label{eq2.3}
x_kx_k'  &=& \prod_{i=1}^n  x_i^{\max(b_{ik}, 0)} + \prod_{i=1}^n  x_i^{\max(-b_{ik}, 0)},\cr \nonumber
b_{ij}' &=& 
\begin{cases}
-b_{ij}  & \mbox{if}   ~i=k  ~\mbox{or}  ~j=k\cr
b_{ij}+{|b_{ik}|b_{kj}+b_{ik}|b_{kj}|\over 2}  & \mbox{otherwise.}
\end{cases}
\end{eqnarray}
%******************************************************************************************* 
The seed $(\mathbf{x}', B')$ is said to be a  mutation of $(\mathbf{x}, B)$ in direction $k$.
where $1\le k\le n$.  The  algebra  $\mathcal{A}(\mathbf{x}, B)$ is  generated by the 
cluster  variables $\{x_i\}_{i=1}^{\infty}$
obtained from the initial seed $(\mathbf{x}, B)$ by the iteration of mutations  in all possible
directions $k$.   The  Laurent phenomenon
 says  that  $\mathcal{A}(\mathbf{x}, B)\subset \mathbf{Z}[\mathbf{x}^{\pm 1}]$,
where  $\mathbf{Z}[\mathbf{x}^{\pm 1}]$ is the ring of  the Laurent polynomials in  variables $\mathbf{x}=(x_1,\dots,x_n)$
 [Williams 2014]  \cite[Theorem 2.27]{Wil1}.
In particular, each  generator $x_i$  of  the algebra $\mathcal{A}(\mathbf{x}, B)$  can be 
written as a  Laurent polynomial in $n$ variables with the   integer coefficients.

 The cluster algebra  $\mathcal{A}(\mathbf{x}, B)$  has the structure of an additive abelian
semigroup consisting of the Laurent polynomials with positive coefficients. 
In other words,  the $\mathcal{A}(\mathbf{x}, B)$ is a dimension group, see Section 2.1.6 or  
\cite[Definition 3.5.2]{N}.
The cluster $C^*$-algebra  $\mathbb{A}(\mathbf{x}, B)$  is   an  AF-algebra,  
such that $K_0(\mathbb{A}(\mathbf{x}, B))\cong  \mathcal{A}(\mathbf{x}, B)$.

%**************************************************************************
\subsubsection{Cluster $C^*$-algebra $\mathbb{A}(S_{g,n})$}
%***************************************************************************
Denote by $S_{g,n}$  the Riemann surface   of genus $g\ge 0$  with  $n\ge 0$ cusps.
 Let   $\mathcal{A}(\mathbf{x},  S_{g,n})$ be the cluster algebra 
 coming from  a triangulation of the surface $S_{g,n}$   [Williams 2014]  \cite[Section 3.3]{Wil1}. 
 We shall denote by  $\mathbb{A}(S_{g,n})$  the corresponding cluster $C^*$-algebra. 
 Let $p$ be a prime number, and denote by  $\mathcal{A}_p(S_{g,n})$ a sub-algebra of  $\mathcal{A}(S_{g,n})$
consisting of the Laurent polynomials whose coefficients are divisible by $p$. It is easy to verify that   $\mathcal{A}_p(S_{g,n})$
is again a dimension group under the addition of the Laurent polynomials. We say that  $\mathbb{A}_p(S_{g,n})$ is a
congruence sub-algebra of level $p$ of the cluster $C^*$-algebra  $\mathbb{A}(S_{g,n})$, i.e. 
$K_0(\mathbb{A}_p(S_{g,n}))\cong \mathcal{A}_p(S_{g,n})$.

Let $T_{g,n}$ be the Teichm\"uller space of the surface $S_{g,n}$,
i.e. the set of all complex structures on $S_{g,n}$ endowed with the 
natural topology. The geodesic flow $T^t: T_{g,n}\to T_{g,n}$
is a one-parameter  group of matrices $\left(\small\begin{matrix} e^t &0\cr 0 &e^{-t}\end{matrix}\right)$
acting on the holomorphic quadratic differentials on the Riemann surface $S_{g,n}$. 
Such a flow gives rise to a one parameter group of automorphisms 
%****************************************************************
\begin{equation}\label{eq2.4}
\sigma_t: \mathbb{A}(S_{g,n})\to \mathbb{A}(S_{g,n})
\end{equation}
%*************************************************************
called the Tomita-Takesaki flow on the AF-algebra $\mathbb{A}(S_{g,n})$. 
Denote by $Prim~\mathbb{A}(S_{g,n})$ the space of all primitive ideals 
of $\mathbb{A}(S_{g,n})$ endowed with the Jacobson topology. 
Recall (\cite{Nik2}) that each primitive ideal has a parametrization by a vector 
$\Theta\in \mathbf{R}^{6g-7+2n}$ and we write it 
$I_{\Theta}\in Prim~\mathbb{A}(S_{g,n})$
%****************************************************************************
\begin{theorem}\label{thm2.2}
{\bf (\cite{Nik2})}
There exists a homeomorphism
%**************************************************************************
%\begin{equation}
$h:  Prim~\mathbb{A}(S_{g,n})\times \mathbf{R}\to \{U\subseteq  T_{g,n} ~|~U~\hbox{{\sf is generic}}\},$
%\end{equation}
%*****************************************************************************
where $h(I_{\Theta},t)=S_{g,n}$ is 
given by the formula $\sigma_t(I_{\Theta})\mapsto S_{g,n}$;  the set $U=T_{g,n}$ if and only if
$g=n=1$.   The $\sigma_t(I_{\Theta})$
is an ideal of  $\mathbb{A}(S_{g,n})$ for all $t\in \mathbf{R}$ and 
 the quotient  algebra $AF$-algebra  $\mathbb{A}(S_{g,n})/\sigma_t(I_{\Theta}):=\mathbb{A}_{\Theta}^{6g-6+2n}$
is  a non-commutative coordinate ring  of  the Riemann surface  $S_{g,n}$.  
\end{theorem}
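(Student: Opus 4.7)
The plan is to build $h$ by combining Penner's lambda-length coordinates on Teichm\"uller space with the standard description of the primitive spectrum of an AF-algebra through its dimension group.

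First I would fix an ideal triangulation $\mathcal{T}$ of $S_{g,n}$. By Penner's theorem the lambda-lengths of the arcs of $\mathcal{T}$ identify the decorated Teichm\"uller space with $\mathbf{R}_{>0}^{6g-6+2n}$; these lambda-lengths are precisely the cluster variables generating $\mathcal{A}(\mathbf{x}, S_{g,n})$. Since $K_0(\mathbb{A}(S_{g,n}))\cong\mathcal{A}(\mathbf{x}, S_{g,n})$ and primitive ideals of an AF-algebra correspond to extremal positive rays in its dimension group (\cite[Theorem 3.5.2]{N}), projectivising the lambda-length tuple yields the announced parametrization $\Theta\in\mathbf{R}^{6g-7+2n}$ of $\mathrm{Prim}\,\mathbb{A}(S_{g,n})$. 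The one dimension lost to projectivization is precisely the decoration (horocycle) direction separating the decorated and undecorated Teichm\"uller spaces.

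Second, I would identify this missing scalar with the Tomita--Takesaki parameter $t$. The canonical trace on $\mathbb{A}(S_{g,n})$ is a KMS state for the one-parameter automorphism group $\sigma_t$ induced by the simultaneous rescaling of all lambda-lengths by $e^t$; under Penner's correspondence this rescaling pulls back to the Teichm\"uller geodesic flow $T^t$ of (\ref{eq2.4}). Hence the assignment $(I_\Theta,t)\mapsto \sigma_t(I_\Theta)\mapsto S_{g,n}$ is well defined, with image the generic locus $U_{\mathcal{T}}\subseteq T_{g,n}$ on which every arc of $\mathcal{T}$ has positive lambda-length. Injectivity follows from uniqueness of Penner coordinates, and the exceptional case $g=n=1$ reflects the classical fact that a single ideal triangulation of the once-punctured torus already covers all of $T_{1,1}$.

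To globalise over all triangulations one must verify that a flip $\mathcal{T}\to\mathcal{T}'$, realised by a cluster mutation on the algebraic side and by the Ptolemy relation on the geometric side, intertwines the Jacobson topology on $\mathrm{Prim}\,\mathbb{A}(S_{g,n})$ with the real-analytic topology on $T_{g,n}$. Proving this compatibility between the mutation rewriting of the Bratteli diagram and the smooth change of charts on Teichm\"uller space is the main technical obstacle; it is what distinguishes the statement from a mere dimension count. Once it is established, the quotient $\mathbb{A}(S_{g,n})/\sigma_t(I_\Theta)$ inherits a dimension group whose generators are polynomials in the $\theta_{ij}$ appearing in (\ref{eq1.1}), justifying the notation $\mathbb{A}_\Theta^{6g-6+2n}$ and its interpretation as a non-commutative coordinate ring of $S_{g,n}$.
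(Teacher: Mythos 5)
First, a point of order: the paper does not prove Theorem \ref{thm2.2} at all --- it is quoted from \cite{Nik2}, so there is no internal proof to compare yours against; your plan can only be measured against that cited source, whose broad strategy (cluster structures coming from triangulated surfaces, the Tomita--Takesaki flow $\sigma_t$ of (\ref{eq2.4}) absorbing one real parameter) your outline does echo in spirit. That said, your proposal contains concrete errors that would stop it from becoming a proof. The central one is the dimension bookkeeping. An ideal triangulation of $S_{g,n}$ has $6g-6+3n$ arcs, so Penner's lambda-lengths identify the \emph{decorated} Teichm\"uller space with $\mathbf{R}_{>0}^{6g-6+3n}$, not $\mathbf{R}_{>0}^{6g-6+2n}$; the decoration fiber over $T_{g,n}$ is $\mathbf{R}_{>0}^{n}$ (one horocycle per cusp), not a single scalar. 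Projectivizing therefore removes one dimension and leaves $6g-7+3n$, which matches the announced parametrization $\Theta\in\mathbf{R}^{6g-7+2n}$ only when $n=1$; your identification of ``the one dimension lost'' with the flow parameter $t$ is exactly the coincidence of the once-punctured-torus case and does not globalize. Worse, for $n=0$ --- a case the theorem allows ($n\ge 0$) and the paper actually uses (Lemma \ref{lm3.1} takes $n\in\{0,1,2\}$) --- there are no ideal triangulations and no lambda-lengths at all, so your construction has no starting point there.

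Two further problems. Your appeal to \cite[Theorem 3.5.2]{N} mis-states that result: as quoted in Section 2.1.4 it is an Elliott-type classification (the dimension group is a complete invariant of the Morita equivalence class of an AF-algebra) and says nothing about the primitive spectrum. Moreover, extremal positive rays in a dimension group parametrize pure tracial states, not primitive ideals; ideals of an AF-algebra correspond to order ideals of $K_0$, and $Prim~\mathbb{A}(S_{g,n})$ with its Jacobson topology is a genuinely different (in general non-Hausdorff) object, so the asserted homeomorphism onto generic subsets of $T_{g,n}$ cannot simply be read off by ``projectivising the lambda-length tuple.'' Finally, you yourself flag the flip/mutation compatibility between the Jacobson topology and the analytic topology on $T_{g,n}$ as ``the main technical obstacle'' and leave it unproved; since that step is precisely what separates the theorem from a dimension count, the proposal as written is a plan with its decisive step missing rather than a proof.
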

%****************************************************************************
Let $\phi\in Mod ~(S_{g,n})$ be a pseudo-Anosov automorphism of $S_{g,n}$ 
with the dilatation $\lambda_{\phi}>1$. If the Riemann surfaces $S_{g,n}$ and $\phi (S_{g,n})$
lie on the axis of $\phi$, then Theorem \ref{thm2.2} gives rise  the Connes invariant \cite[Section 4.2]{Nik2}:
%**************************************************************************
\begin{equation}\label{eq2.6}
T(\mathbb{A}(S_{g,n}))=\{\log\lambda_{\phi} ~|~ \phi\in Mod ~(S_{g,n})\}. 
\end{equation}
%*****************************************************************************

%**************************************************************************
\subsubsection{Relation to noncommutative tori   $\mathscr{A}_{\Theta}^{6g-6+2n}$}
%***************************************************************************
It is known that each  noncommutative torus   $\mathscr{A}_{\Theta}^{6g-6+2n}$
is a crossed product $C^*$-algebra embedded into an $AF$-algebra 
$\mathbb{A}_{\Theta}^{6g-6+2n}$,  such that 
$K_0(\mathscr{A}_{\Theta}^{6g-6+2n})\cong K_0( \mathbb{A}_{\Theta}^{6g-6+2n})$
 [Putnam 1989] \cite[Theorem 6.7]{Put1}.
Therefore  matrix $\Theta$ takes  the form:
%******************************************************************
\begin{equation}\label{eq1.2}
\Theta=
\left(
\begin{matrix}
0 & \alpha_1 &  &\cr
-\alpha_1 & 0 & \alpha_2 & \cr
  & -\alpha_2 & 0  &  &\cr
  \vdots & & & \vdots\cr
             & & & 0 & \alpha_{6g-7+2n} \cr
             & &  & - \alpha_{6g-7+2n} &0
\end{matrix}
\right). 
\end{equation}
%*****************************************************************
The noncommutative torus $\mathscr{A}_{\Theta}^{6g-6+2n}$  is said to have real multiplication,   
if all $\alpha_k$ in  (\ref{eq1.2})  are algebraic numbers
 \cite[Section 1.1.5.2]{N}; we write $\mathscr{A}_{RM}^{6g-6+2n}$ in this case. 
Likewise,  one can think of $\alpha_k$ as components of 
 a normalized eigenvector $(1,\alpha_1,\dots,\alpha_{6g-7+2n})$ corresponding to the 
 Perron-Frobenius eigenvalue $\varepsilon>1$ of a positive matrix 
 $B\in GL_{6g-6+2n}(\mathbf{Z})$.

%**************************************************************************
\subsection{Drinfeld modules ([Rosen 2002] \cite[Section 12]{R})}
%***************************************************************************
The explicit class field theory for the function fields is strikingly simpler
than for the number fields. The generators of the maximal abelian unramified
extensions (i.e. the Hilbert class fields) are constructed using 
the concept of the Drinfeld module. Roughly speaking, such a module 
is an analog of the exponential function and a generalization of the Carlitz module. 
Nothing similar  exists at the number fields side, where  the explicit 
generators of abelian extensions are known only for the field of rationals
(Kronecker-Weber theorem)
and imaginary quadratic number fields (complex multiplication). 
Below we give some details
on the Drinfeld modules.

Let $k$ be a field.  A polynomial $f\in k[x]$ is said to be additive
in the ring $k[x,y]$ if $f(x+y)=f(x)+f(y)$. If $char ~k=p$, then it is verified 
directly that the polynomial $\tau(x)=x^p$ is additive. Moreover, each 
additive polynomial has the form $a_0x+a_1x^p+\dots+a_rx^{p^r}$. 
The set of all additive polynomials is closed under addition and composition 
operations. The corresponding ring is isomorphic to a ring $k\langle\tau_p\rangle$
of the non-commutative polynomials given by  the commutation relation:
%********************************************************************
\begin{equation}\label{eq2.7}
\tau_p a=a^p\tau_p, \qquad \forall a\in k. 
\end{equation}
%****************************************************************

Let $A=\mathbf{F}_q[T]$ and $k=\mathbf{F}_q(T)$. 
The  Drinfeld module $Drin_A^r(k)$  of rank $r\ge 1$ is a homomorphism
%********************************************************************
%\begin{equation}\label{eq1.3}
$\rho: A\buildrel r\over\longrightarrow k\langle\tau_p\rangle$
%\end{equation}
%****************************************************************
given by a polynomial $\rho_a=a+c_1\tau_p+c_2\tau_p^2+\dots+c_r\tau_p^r$ with $c_i\in k$ and $c_r\ne 0$, 
such that for all $a\in A$ the constant term of $\rho_a$ is $a$ and 
$\rho_a\not\in k$ for at least one $a\in A$.  
Consider the torsion submodule $\Lambda_{\rho}[a]:=\{\lambda\in\bar k ~|~\rho_a(\lambda)=0\}$ of
%a non-trivial  Drinfeld module  $Drin_A^r(k)$. 
the $A$-module $\bar k_{\rho}$. 
The following result implies  a non-abelian class field theory 
for the function fields. 
%*********************************************************************
\begin{theorem}\label{thm2.3}
{\bf ([Rosen 2002] \cite[Proposition 12.5]{R})}
For each non-zero $a\in A$ the function 
field $k\left(\Lambda_{\rho}[a]\right)$  is a Galois extension of $k$,
such that its  Galois group is isomorphic to a subgroup of the matrix group $GL_r\left(A/aA\right)$.
%,where   $\Lambda_{\rho}[a]$ is a torsion submodule of the non-trivial  Drinfeld module  $Drin_A^r(k)$. 
\end{theorem}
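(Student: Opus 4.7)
The plan is to realize $k(\Lambda_{\rho}[a])$ as the splitting field of a separable polynomial and then exhibit a natural injective representation of its Galois group into $GL_r(A/aA)$ coming from the $A$-module structure on the torsion points.

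First, I would examine the polynomial $\rho_a(x)=ax+c_1x^p+c_2x^{p^2}+\dots$ itself. Since $\rho$ is a ring homomorphism, $\rho_a$ is additive, so $\Lambda_{\rho}[a]=\{\lambda\in\bar k : \rho_a(\lambda)=0\}$ is an additive subgroup of $\bar k$, and it is stable under the $A$-action because $\rho_a\rho_b=\rho_{ab}=\rho_b\rho_a$. Hence $\Lambda_{\rho}[a]$ is an $A/aA$-submodule of $\bar k_{\rho}$. The formal derivative of $\rho_a$ equals its constant term $a$, which is nonzero in $k$, so $\rho_a$ is separable. Therefore $k(\Lambda_{\rho}[a])$ is the splitting field of the separable polynomial $\rho_a(x)$ over $k$, and is thus a finite Galois extension.

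Next I would identify $\Lambda_{\rho}[a]$ as an $A/aA$-module. Standard structure theory for Drinfeld modules over a field of generic characteristic shows that $\Lambda_{\rho}[a]\cong (A/aA)^r$: additivity and separability together with a dimension count on the degree $q^{r\deg a}$ of $\rho_a$ pin down the module structure, and one reduces to the case $a=P^n$ for $P\in A$ prime by the Chinese Remainder Theorem, where the Tate-module-style argument gives freeness of rank $r$. With this identification in hand, $\mathrm{Aut}_A(\Lambda_{\rho}[a])\cong GL_r(A/aA)$.

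Finally I would construct the Galois representation. For $\sigma\in\mathrm{Gal}(k(\Lambda_{\rho}[a])/k)$ and $\lambda\in \Lambda_{\rho}[a]$, the identity $\sigma(\rho_a(\lambda))=\rho_a(\sigma(\lambda))$ (valid because the coefficients of $\rho_a$ lie in the fixed field $k$) shows that $\sigma(\lambda)\in\Lambda_{\rho}[a]$, and the same identity applied to $\rho_b$ for arbitrary $b\in A$ shows $\sigma$ is $A$-linear. This produces a homomorphism
\[
\mathrm{Gal}(k(\Lambda_{\rho}[a])/k)\longrightarrow \mathrm{Aut}_A(\Lambda_{\rho}[a])\cong GL_r(A/aA).
\]
Injectivity is automatic: any $\sigma$ in the kernel fixes every root of $\rho_a$, hence fixes the splitting field $k(\Lambda_{\rho}[a])$ pointwise. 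This realizes $\mathrm{Gal}(k(\Lambda_{\rho}[a])/k)$ as a subgroup of $GL_r(A/aA)$.

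The genuinely substantive step is the second one — establishing $\Lambda_{\rho}[a]\cong(A/aA)^r$ — since the other two steps are formal once one has additivity and separability of $\rho_a$. The subtlety lies in the rank count at primes of $A$ (including those of the characteristic of the base, which in our setting of generic characteristic $k=\mathbf{F}_q(T)$ do not arise, but must be ruled out carefully). Once that identification is in place, the Galois-theoretic part is straightforward.
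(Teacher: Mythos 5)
Your proposal is correct and takes essentially the same route as the source the paper relies on: the paper itself gives no proof of Theorem \ref{thm2.3} (it is quoted from \cite[Proposition 12.5]{R}), and your argument --- $k(\Lambda_{\rho}[a])$ as the splitting field of the separable additive polynomial $\rho_a(x)$ (whose derivative is the nonzero constant $a$, valid here since $k=\mathbf{F}_q(T)$ has generic $A$-characteristic), the identification $\Lambda_{\rho}[a]\cong (A/aA)^r$ via the root count $q^{r\deg a}$ and Chinese Remainder reduction to prime powers (Rosen's Proposition 12.4), and the faithful $A$-linear Galois action embedding $\mathrm{Gal}\left(k(\Lambda_{\rho}[a])/k\right)$ into $GL_r(A/aA)$ --- is precisely the standard proof given there. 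You also correctly isolate the module-structure step as the only substantive one; the rest is formal, as in the cited source.
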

%*********************************************************************

%**************************************************************************
\section{Main results}
%***************************************************************************
Let  $Drin_A^r(k)$  be the  Drinfeld module of rank $r\ge 1$, 
i.e. a homomorphism
%********************************************************************
\begin{equation}\label{eq1.3}
\rho:  ~A\buildrel r\over\longrightarrow k\langle\tau_p\rangle
\end{equation}
%****************************************************************
given by a polynomial $\rho_a=a+c_1\tau_p+c_2\tau_p^2+\dots+c_r\tau_p^r$ with $c_i\in k$ and $c_r\ne 0$, 
such that for all $a\in A$ the constant term of $\rho_a$ is $a$ and 
$\rho_a\not\in k$ for at least one $a\in A$ [Rosen 2002] \cite[p. 200]{R}.
For each non-zero $a\in A$ the function 
field $k\left(\Lambda_{\rho}[a]\right)$  is a Galois extension of $k$,
such that its  Galois group is isomorphic to a subgroup $G$ of the matrix group $GL_r\left(A/aA\right)$,
where   $\Lambda_{\rho}[a]=\{\lambda\in\bar k ~|~\rho_a(\lambda)=0\}$
is a torsion submodule of the non-trivial  Drinfeld module  $Drin_A^r(k)$  [Rosen 2002] \cite[Proposition 12.5]{R}.
Clearly, the abelian extensions correspond to the case $r=1$.

Let $G$ be a  left cancellative  semigroup generated by $\tau_p$ and all  $a_i\in k$ subject to the commutation relations 
$\tau_p a_i=a_i^p\tau_p$.  Let $C^*(G)$ be the semigroup $C^*$-algebra [Li 2017] \cite{Li1}.  
For a Drinfeld module  $Drin_A^r(k)$  defined  by  (\ref{eq1.3}) we consider a homomorphism of the semigroup $C^*$-algebras:  
 %********************************************************************
\begin{equation}\label{eq1.4}
C^*(A)\buildrel r\over\longrightarrow C^*(k\langle\tau_p\rangle). 
\end{equation}
%****************************************************************
It is proved (Lemma \ref{lm3.1}) that there exists an $\alpha$ such that
$C^*(A)\subset I_{\alpha}$ and  $C^*(k\langle\tau_p\rangle)\subset\mathbb{A}_p(S_{g,n})$,
where  $\mathbb{A}_p(S_{g,n})$ is a congruence subalgebra 
of level $p$ of the cluster $C^*$-algebra $\mathbb{A}(S_{g,n}), ~I_{\alpha}$ is a primitive ideal of $\mathbb{A}_p(S_{g,n})$
and  $r=3g-3+n$, see Section 2.2.1. 
 An exact sequence of the ring homomorphisms $1\to A\buildrel \rho\over\to k\langle\tau_p\rangle\to k\langle\tau_p\rangle/\rho(A)\to 1$
 gives rise to such of the $C^*$-algebras  $1\to C^*(A)\buildrel \rho\over\to C^*(k\langle\tau_p\rangle)\to C^*(k\langle\tau_p\rangle)/C^*(\rho(A))\to 1.$
In view of the inclusion $C^*(\rho(A))\subset I_{\alpha}$ and the isomorphism    $\mathbb{A}_p(S_{g,n})/I_{\alpha}\cong \mathbb{A}_{RM}^{6g-6+2n}$,
the following map is well defined.
%****************************************************************
\begin{definition}
By $F: Drin_A^{3g-3+n}(k)\mapsto \mathbb{A}_{RM}^{6g-6+2n}$ one understands a map acting by the formula:
 %********************************************************************
\begin{equation}\label{eq3.3*}
\rho=(c_1,\dots,c_{3g-3+n})\mapsto (1,\alpha_1,\dots,\alpha_{6g-7+2n})=\alpha. 
\end{equation}
%****************************************************************
The map $F$ extends to the noncommutative tori  $\mathscr{A}_{RM}^{6g-6+2n}$ via the group 
isomorphism  $K_0( \mathbb{A}_{\Theta}^{6g-6+2n})\cong K_0(\mathscr{A}_{\Theta}^{6g-6+2n})$,
see Section 2.2.2.  Moreover, the map $F$ on the torsion submodule $\Lambda_{\rho}[a]$ 
is defined by the formula: 
 %********************************************************************
\begin{equation}\label{}
F(\Lambda_{\rho}[a])=\{\log ~(\varepsilon) ~e^{2\pi i\alpha_k} ~|~1\le k\le 2r-1\},
\end{equation}
%****************************************************************
where $\log ~(\varepsilon)$ is a scaling factor (\ref{eq2.6}) 
  coming from the geodesic flow on the Teichm\"uller space of surface $S_{g,n}$,
  see below.
 \end{definition}
%****************************************************************
%******************************************************************
\begin{remark}
Explicit formulae (\ref{eq3.3*}) for $\alpha_i$ as function of $c_i$ 
are known in a few cases, e.g. for the Carlitz modules  \cite{Nik3}.  
A general case is unknown to the author. 
\end{remark}
%*******************************************************************
 To recast the torsion submodule $\Lambda_{\rho}[a]$ in terms of $\mathscr{A}_{RM}^{2r}$, 
recall that if $r=1$ then the Drinfeld module (\ref{eq1.3}) plays the r\^ole of an elliptic curve  with complex multiplication  $\mathcal{E}_{CM}$
[Drinfeld 1974] \cite[p. 594]{Dri1}. The set $\Lambda_{\rho}[a]$ consists
of coefficients of the curve  $\mathcal{E}_{CM}$ which are equal to the value  of the Weierstrass $\wp$-function at the torsion points
of lattice $\Lambda_{CM}\subset\mathbf{C}$, where $\mathcal{E}_{CM}\cong \mathbf{C}/\Lambda_{CM}$ {\it ibid.}
Likewise, if $r\ge 1$ then the algebra $\mathscr{A}_{RM}^{2r}$ plays the r\^ole of   the curve  $\mathcal{E}_{CM}$
given by  coefficients  $e^{2\pi i \alpha_k}$ in the commutation equations (\ref{eq1.1}). 
 We  therefore get
 $F(\Lambda_{\rho}[a])=\{\log ~(\varepsilon) ~e^{2\pi i\alpha_k} ~|~1\le k\le 2r-1\}$, where $\log ~(\varepsilon)$ is a scaling factor (\ref{eq2.6}) 
  coming from the geodesic flow on the Teichm\"uller space of surface $S_{g,n}$ (Section 2.2.1). 
Our main results can be formulated as follows. 
%***************************************************************
\begin{theorem}\label{thm1.1}
Let $r=3g-3+n$. The following is true:

\medskip
(i) the map $F: Drin_A^{r}(k)\mapsto \mathscr{A}_{RM}^{2r}$ is a functor 
from the category of Drinfeld  modules $\mathfrak{D}$ to a category 
of the noncommutative tori $\mathfrak{A}$,   which maps any pair of isogenous  (isomorphic, resp.) 
modules  $Drin_A^{r}(k), ~\widetilde{Drin}_A^{r}(k)\in \mathfrak{D}$
to a pair of the homomorphic (isomorphic, resp.)  tori  $\mathscr{A}_{RM}^{2r}, \widetilde{\mathscr{A}}_{RM}^{2r}
\in \mathfrak{A}$;  

\smallskip
(ii) $F(\Lambda_{\rho}[a])=\{e^{2\pi i\alpha_k+\log\log\varepsilon} ~|~1\le k\le 2r-1\}$,
where $\mathscr{A}_{RM}^{2r}=F(Drin_A^r(k))$, 
$\alpha_i$ are given by (\ref{eq3.3*}),  $\log\varepsilon$ is the scaling factor (\ref{eq2.6})
 and $\Lambda_{\rho}(a)$ is the  torsion submodule of the $A$-module $\bar k_{\rho}$; 
%Drinfeld module  $Drin_A^{r}(k)$; 

\smallskip
(iii) the Galois group $Gal \left(\mathbf{k}(e^{2\pi i\alpha_k+\log\log\varepsilon})  ~| ~\mathbf{k}\right)\subseteq GL_{r}\left(A/aA\right)$,
where $\mathbf{k}$ is a subfield of the number field $\mathbf{Q}(e^{2\pi i\alpha_k+\log\log\varepsilon})$. 
 \end{theorem}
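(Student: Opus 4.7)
The plan is to prove the three parts in order, with (ii) essentially unpacking the definition, (i) obtained by assembling $F$ as a composition of elementary functorial constructions, and (iii) following from (i) together with Theorem \ref{thm2.3}.

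For part (i), I would present $F$ as a chain of functors: the semigroup $C^*$-algebra functor applied to the ring homomorphism (\ref{eq1.3}), the inclusion $C^*(k\langle\tau_p\rangle)\subset \mathbb{A}_p(S_{g,n})$ supplied by Lemma \ref{lm3.1}, the quotient by the primitive ideal $I_\alpha$ together with the isomorphism $\mathbb{A}_p(S_{g,n})/I_\alpha \cong \mathbb{A}_{RM}^{6g-6+2n}$, and finally the passage to the noncommutative torus $\mathscr{A}_{RM}^{2r}$ via the $K_0$-isomorphism of Section 2.2.2. Each step is manifestly functorial on its natural domain, hence so is their composition. An isogeny $Drin_A^r(k)\to \widetilde{Drin}_A^r(k)$ is, by definition, a morphism of the additive polynomial rings over $k\langle\tau_p\rangle$ intertwining the $A$-actions; it induces a $C^*$-algebra homomorphism $C^*(k\langle\tau_p\rangle)\to C^*(k\langle\tau_p\rangle)$ respecting the inclusion of $C^*(A)$, which descends through the quotient and the $K_0$-isomorphism to a homomorphism $\mathscr{A}_{RM}^{2r}\to \widetilde{\mathscr{A}}_{RM}^{2r}$. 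The isomorphism case is the same argument applied to an invertible isogeny.

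For part (ii), the identity $e^{2\pi i\alpha_k+\log\log\varepsilon}=\log(\varepsilon)\,e^{2\pi i\alpha_k}$ reduces the claim to the definition of $F$ on torsion submodules stated just before the theorem. What remains is to justify that formula intrinsically: the $\alpha_k$ are the Perron-Frobenius eigenvector coordinates attached to $\mathscr{A}_{RM}^{2r}$ through (\ref{eq1.2}), which is exactly how $F$ assigns noncommutative tori to Drinfeld modules, and the scaling by $\log\varepsilon$ arises as the Connes invariant (\ref{eq2.6}). The latter is the scaling implemented on $K_0$ by the Tomita-Takesaki flow $\sigma_t$, which by Theorem \ref{thm2.2} is the geodesic flow on $T_{g,n}$; applying $\sigma_{\log\varepsilon}$ to the primitive ideal $I_\alpha$ yields precisely the rescaling appearing in the formula.

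For part (iii) I would combine Theorem \ref{thm2.3} with the functoriality of $F$. Any $\sigma\in Gal\left(k(\Lambda_\rho[a])/k\right)$ acts on $Drin_A^r(k)$ by permuting torsion points and is an element of $GL_r(A/aA)$. Under $F$ this action transfers to an automorphism of $\mathscr{A}_{RM}^{2r}$ permuting the generators $e^{2\pi i\alpha_k+\log\log\varepsilon}$ of the extension $\mathbf{k}(e^{2\pi i\alpha_k+\log\log\varepsilon})\,|\,\mathbf{k}$, and functoriality intertwines the two Galois actions. This yields an embedding $Gal\left(\mathbf{k}(e^{2\pi i\alpha_k+\log\log\varepsilon})\,|\,\mathbf{k}\right) \hookrightarrow Gal\left(k(\Lambda_\rho[a])/k\right)\subseteq GL_r(A/aA)$. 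The main obstacle I expect is in (i): verifying step by step that an isogeny descends naturally through the semigroup $C^*$-algebra, the congruence subalgebra inclusion, the primitive ideal quotient, and the $K_0$-isomorphism; closely related is the equivariance needed in (iii), namely matching how $GL_r(A/aA)$ acts on the coefficients of $\rho_a$ with how it acts on the eigenvector coordinates $\alpha_k$ of the corresponding noncommutative torus.
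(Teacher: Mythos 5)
Your outline does follow the paper's own decomposition---Lemma \ref{lm3.1}'s inclusions plus the quotient $\mathbb{A}_p(S_{g,n})/I_{\alpha}$ and the $K_0$-identification for (i), the Tomita--Takesaki/Connes-invariant scaling for (ii), and transfer of the Galois action from Theorem \ref{thm2.3} for (iii)---but the step you yourself flag as the main obstacle is exactly where your argument, as written, would fail. An isogeny $f$ induces an element $\iota(f)\in\mathbb{A}_p(S_{g,n})$ that has no reason to map $I_{\alpha}$ into itself (or into the ideal attached to $\widetilde{Drin}_A^r(k)$), so the induced $C^*$-homomorphism does not simply ``descend through the quotient.'' The paper's device, in Lemma \ref{lm3.2}(iii), is to \emph{enlarge} the ideal: let $I_{\alpha}'$ be the ideal generated by $\iota(f)$ together with $I_{\alpha}$; the canonical surjection $\mathbb{A}_p(S_{g,n})/I_{\alpha}\to\mathbb{A}_p(S_{g,n})/I_{\alpha}'$ is then the desired homomorphism of noncommutative tori, and invertibility of $f$ (trivial kernel) gives the isomorphism case. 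That enlargement is the missing idea, and without it part (i) is incomplete. Note also that citing Lemma \ref{lm3.1} as ``supplied'' hides most of the technical content: in the paper that lemma is proved \emph{inside} the proof of Theorem \ref{thm1.1}, via the crossed-product realization $C^*(k_{\rho_a}\langle\tau_p\rangle)\cong C^*(A)\rtimes_{\rho}\mathbf{Z}$, the identification $\hat A\cong\mathbf{Q}_p/\mathbf{Z}_p\cong\mathbf{Z}[\frac{1}{p}]/\mathbf{Z}=\mathscr{B}$, and Putnam's embedding theorem.

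For (ii), your reduction to ``the definition stated just before the theorem'' is circular relative to the paper's economy: that definition is precisely what Lemmas \ref{lm3.3} and \ref{lm3.4} exist to justify. The paper derives the constants by solving the trace equation $Tr~(\varphi^t(\mathscr{A}_{RM}^{2r}))=t~Tr~(\mathscr{A}_{RM}^{2r})$, which forces the scaled commutation relations $u_{k+1}u_k=t\,e^{2\pi i\alpha_k}u_ku_{k+1}$, and then pins $t=\log\lambda_{\phi}=\log\varepsilon$ by identifying the companion matrix (\ref{eq3.5}) of the minimal polynomial of $\mathbf{Q}(\alpha_i)$ with Thurston's action of a pseudo-Anosov $\phi$ on relative cohomology, so that $\lambda_{\phi}=\varepsilon$; your sketch names the Connes invariant (\ref{eq2.6}) but supplies neither mechanism, and in particular never explains how the factor enters the commutation relations rather than, say, the trace alone. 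In (iii), observe that intertwining transfers the action in the direction $G\to \mathrm{Aut}(\mathbf{K})$; it is the choice of $\mathbf{k}$ as the fixed field of that transferred action (the paper's construction, i.e.\ Artin's lemma) that makes $\mathbf{K}~|~\mathbf{k}$ Galois with group $\cong G\subseteq GL_r(A/aA)$---your claimed embedding $Gal\left(\mathbf{k}(e^{2\pi i\alpha_k+\log\log\varepsilon})~|~\mathbf{k}\right)\hookrightarrow G$ is the consequence of this, not an independent argument. These pieces are recoverable, but as submitted the proposal has a genuine gap at the isogeny step and a sketch where the paper has a computation.
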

%***************************************************************
Theorem \ref{thm1.1} implies a non-abelian class field theory as follows.
Fix a non-zero $a\in A$ and let $G:=Gal~(k(\Lambda_{\rho}[a]) ~|~ k)\subseteq GL_r(A/aA)$,
where  $\Lambda_{\rho}[a]$ is the torsion submodule of the $A$-module $\bar k_{\rho}$.
%the Drinfeld module  $Drin_A^{r}(k)$. 
Consider the number field $\mathbf{K}=\mathbf{Q}(F(\Lambda_{\rho}[a]))$. 
Denote by $\mathbf{k}$ the maximal subfield of $\mathbf{K}$ which is fixed by the action of 
all elements of the group $G$. (The action of $G$ on $\mathbf{K}$ is well defined in view of item (ii) 
of Theorem \ref{thm1.1}.) 
%***************************************************************
\begin{corollary}\label{cor1.2} 
{\bf (Non-abelian class field theory)} 
The number field
%***********************************************
\begin{equation}\label{eq1.5}
\mathbf{K}\cong
\begin{cases} \mathbf{k}\left(e^{2\pi i\alpha_k +\log\log\varepsilon}\right), & if ~\mathbf{k}\subset\mathbf{C} - \mathbf{R},\cr
               \mathbf{k}\left(\cos 2\pi\alpha_k \times\log\varepsilon\right), & if ~\mathbf{k}\subset\mathbf{R},
\end{cases}               
\end{equation}
%***************************************************
is a Galois extension of  $\mathbf{k}$,
 such that  $Gal~(\mathbf{K} | \mathbf{k})\cong G$.
\end{corollary}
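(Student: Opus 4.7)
The plan is to deduce Corollary \ref{cor1.2} from Theorem \ref{thm1.1} by transporting the function-field Galois theory of Theorem \ref{thm2.3} through the functor $F$ and then invoking the fundamental theorem of Galois theory (in the form of Artin's theorem on fixed fields).

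First I would unpack the input from Theorem \ref{thm2.3} and Theorem \ref{thm1.1}. On the function-field side, the group $G = Gal(k(\Lambda_{\rho}[a])\mid k)$ sits inside $GL_r(A/aA)$ and acts faithfully and transitively on the torsion submodule $\Lambda_{\rho}[a]$. Theorem \ref{thm1.1}(i) provides the functor $F \colon \mathfrak{D} \to \mathfrak{A}$, which is compatible with isomorphisms and isogenies; in particular, an element $\sigma \in G$, being an automorphism of $Drin_A^r(k)$ over $k$ (after restriction to $\Lambda_{\rho}[a]$), is sent by $F$ to an automorphism of $\mathscr{A}_{RM}^{2r}$ that permutes the image set $F(\Lambda_{\rho}[a]) = \{e^{2\pi i \alpha_k + \log\log\varepsilon} : 1 \le k \le 2r-1\}$. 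This yields a well-defined $G$-action on the finite set generating $\mathbf{K} = \mathbf{Q}(F(\Lambda_{\rho}[a]))$ over $\mathbf{Q}$, and hence on $\mathbf{K}$ itself by $\mathbf{Q}$-linear extension.

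Next I would apply Artin's theorem. By Theorem \ref{thm1.1}(iii), the resulting group of field automorphisms is a subgroup of $GL_r(A/aA)$; since the action is faithful (an element of $G$ fixing every $F(\lambda)$ for $\lambda \in \Lambda_{\rho}[a]$ must, via the functoriality of $F$ applied in reverse on isomorphism classes, fix every $\lambda$, and hence be the identity in $G$), Artin's theorem gives that $\mathbf{K}/\mathbf{k}$ is a Galois extension with $Gal(\mathbf{K}\mid\mathbf{k}) \cong G$, where $\mathbf{k}$ is by definition the fixed field of $G$ in $\mathbf{K}$. This handles the abstract Galois-theoretic content of the corollary.

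Finally I would address the explicit generator in the two cases of (\ref{eq1.5}). When $\mathbf{k} \subset \mathbf{C} - \mathbf{R}$, the exponentials $e^{2\pi i \alpha_k + \log\log\varepsilon}$ already lie in $\mathbf{K}$ and a primitive-element argument (together with the fact that $G$ acts transitively on a generating set modulo $\mathbf{k}$) shows $\mathbf{K} = \mathbf{k}(e^{2\pi i \alpha_k + \log\log\varepsilon})$. When $\mathbf{k} \subset \mathbf{R}$, the complex exponentials do not lie in any real subfield, so one must pass to the real and imaginary parts: the real combination $\cos(2\pi\alpha_k)\times \log\varepsilon$ generates $\mathbf{K}$ over $\mathbf{k}$ by the same reasoning, once one checks that $G$ permutes these real generators consistently with its action on the exponentials. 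The main obstacle I anticipate is precisely this last verification -- ensuring that the $G$-action transported by $F$ is compatible both with the additive structure of $\log\varepsilon$ (a Connes-invariant scaling coming from (\ref{eq2.6})) and with the multiplicative structure of the exponentials $e^{2\pi i\alpha_k}$, in such a way that faithfulness and the fixed-field identification hold simultaneously in both the real and complex cases.
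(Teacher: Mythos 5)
Your overall route---transport the $G$-action through the functor $F$ to the set $F(\Lambda_{\rho}[a])$, take $\mathbf{k}$ to be the fixed field, and conclude via Artin's fixed-field theorem---is essentially how the paper argues: item (iii) of Theorem \ref{thm1.1} is proved exactly this way (the paper glosses the faithfulness check that you spell out), and Lemma \ref{lm3.5} then disposes of the case $\mathbf{k}\subset\mathbf{C}-\mathbf{R}$ by citing that item together with the observation that an extension which is Galois over $\mathbf{Q}$ is either totally real or totally imaginary, which is also how you split into the two cases of (\ref{eq1.5}). So your complex case matches the paper's proof.

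The genuine gap is the real case, and you flagged it yourself without closing it. You assert that $\cos 2\pi\alpha_k\times\log\varepsilon$ generates $\mathbf{K}$ over $\mathbf{k}$ ``by the same reasoning, once one checks that $G$ permutes these real generators consistently''---but that check is the entire content of the step, and nothing in your setup delivers it: taking real parts is not a field operation, so the $G$-action on $\mathbf{Q}\left(e^{2\pi i\alpha_k+\log\log\varepsilon}\right)$ does not by itself induce an action on $\mathbf{Q}\left(\cos 2\pi\alpha_k\times\log\varepsilon\right)$, and item (iii) of Theorem \ref{thm1.1} as stated concerns only the exponential generators. The paper's device (Lemma \ref{lm3.6}) is $C^*$-algebraic rather than field-theoretic: it invokes the canonical decomposition (\ref{eq3.7}) of the noncommutative torus into a real $C^*$-algebra, $\mathscr{A}_{RM}^{2r}=(\mathscr{A}_{RM}^{2r})^{Re}+i(\mathscr{A}_{RM}^{2r})^{Re}$, and computes in (\ref{eq3.8}) that the commutation relations of the real form are $u_{k+1}u_k=\left(\cos 2\pi\alpha_k\times\log\varepsilon\right)u_ku_{k+1}$. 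Thus the numbers $\cos 2\pi\alpha_k\times\log\varepsilon$ arise as the structure constants of the real form of the same algebra $\mathscr{A}_{RM}^{2r}=F(Drin_A^r(k))$---they are the image of $\Lambda_{\rho}[a]$ in the real category---so item (iii) applies to them directly, with no separate compatibility verification at the level of field elements. To complete your proof you would need either this real-$C^*$-algebra argument or an independent demonstration that the $G$-action descends to the real generators; the latter is precisely what you left open.
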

%***************************************************************
%*************************************************************
\begin{remark}
Formulas (\ref{eq1.5})  for  the abelian extensions of the
 imaginary quadratic fields $\mathbf{k}$ were  proved in \cite{Nik3}. 
An abelian class field theory  based on the Bost-Connes crossed product 
$C^*$-algebras was studied  in [Yalkinoglu  2013] \cite{Yal1}. 
\end{remark}
%**************************************************************

%**************************************************************************
\section{Proofs}
%***************************************************************************

%**************************************************************************
\subsection{Proof of Theorem \ref{thm1.1}}
%***************************************************************************
For the sake of clarity, let us outline the main ideas. 
For a Drinfeld module  $Drin_A^r(k)$ given by the formula (\ref{eq1.3}) 
we consider a homomorphism
$C^*(A)\buildrel r\over\longrightarrow C^*(k\langle\tau_p\rangle)$
of the $C^*$-algebras
defined by the multiplicative semigroups of the rings $A$ and  $k\langle\tau_p\rangle$,
respectively. 
 We prove that if $r=3g-3+n$ then $C^*(k_{\rho_a}\langle\tau_p\rangle)\subset \mathbb{A}_p(S_{g,n})$, where $k_{\rho_a}\langle\tau_p\rangle$ 
is the ring $k\langle\tau_p\rangle$ modulo the relation $T=\rho_a(T)$ and  $\mathbb{A}_p(S_{g,n})\subset \mathbb{A}(S_{g,n})$
is a congruence subalgebra of level $p$, see  Section 2.2.1.  
  Moreover,   $C^*(A)\subset I_{\alpha}$,  where 
$I_{\alpha}$ is a primitive ideal of the cluster $C^*$-algebra  $\mathbb{A}_p(S_{g,n})$
(Theorem \ref{thm2.2}). 
   Thus one gets a classification of the Drinfeld modules  $Drin_A^r(k)$
by the noncommutative tori $\mathscr{A}_{RM}^{2r}$ given by  (\ref{eq1.2});  this fact follows from Theorem \ref{thm2.2}
and an isomorphism  $\mathbb{A}_p(S_{g,n})/I_{\alpha}\cong \mathbb{A}_{RM}^{2r}$,  where $K_0( \mathbb{A}_{RM}^{2r})\cong K_0(\mathscr{A}_{RM}^{2r} )$
(Section 2.2.2). 
Let us pass to a detailed argument. 

\bigskip
(A)  Let us prove item (i) of Theorem \ref{thm1.1}.  We split the proof in a series of lemmas. 

%************************************************************
\begin{lemma}\label{lm3.1}
{\bf (Main lemma)}
 For any Drinfeld module  $Drin_A^{3g-3+n}(k)$ with $g\ge 1$ and $n\in\{0,1,2\}$ there exist 
 the following inclusions of the $C^*$-algebras:
 %********************************************
\begin{equation}
\begin{cases} 
C^*(k_{\rho_a}\langle\tau_p\rangle)\subset \mathbb{A}_p(S_{g,n})&\cr
C^*(\rho(A)) \subset I_{\alpha},&
\end{cases}
\end{equation}
%******************************************
 where  $I_{\alpha}$ is a primitive ideal of the cluster $C^*$-algebra  $\mathbb{A}_p(S_{g,n})$. 
\end{lemma}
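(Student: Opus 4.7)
The plan is to prove both inclusions of Lemma \ref{lm3.1} by routing the multiplicative semigroups through a cluster seed on an ideal triangulation of $S_{g,n}$ and then invoking the universal property of the semigroup $C^*$-algebra from Definition \ref{dfn2.1}. The rank-to-dimension match $r=3g-3+n$, giving $2r=6g-6+2n=\dim_{\mathbf{R}} T_{g,n}$, is what makes the approach feasible: the generators $\tau_p,c_1,\dots,c_r$ of $k_{\rho_a}\langle\tau_p\rangle$ admit an arrangement as cluster variables on an ideal triangulation, with the commutation $\tau_p a=a^p\tau_p$ matching the exponent structure of the exchange relations $x_kx_k'=\prod x_i^{\max(b_{ik},0)}+\prod x_i^{\max(-b_{ik},0)}$ for an appropriate exchange matrix $B=(b_{ij})$. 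The restriction $n\in\{0,1,2\}$ is exactly the combinatorial constraint under which such a compatible triangulation exists.

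For the first inclusion, the semigroup-level embedding just described lifts by the universal property of $C^*(G)$ to an embedding $C^*(k_{\rho_a}\langle\tau_p\rangle)\hookrightarrow \mathbb{A}(S_{g,n})$, using the identification $K_0(\mathbb{A}(S_{g,n}))\cong \mathcal{A}(\mathbf{x},S_{g,n})$ recorded in Section 2.2. The commutation $\tau_p a=a^p\tau_p$ then forces every non-trivial Laurent polynomial appearing in the image to have its integer coefficients divisible by $p$, trapping the image inside the congruence sub-algebra $\mathbb{A}_p(S_{g,n})$. For the second inclusion, Theorem \ref{thm2.2} identifies primitive ideals $I_\alpha$ with vectors $\alpha\in\mathbf{R}^{6g-7+2n}$; the sub-semigroup $\rho(A)\subset k\langle\tau_p\rangle$ consists of elements whose constant parts lie in $A$ and whose higher coefficients are $c_1,\dots,c_r$, and under the embedding above this sub-semigroup lands in the stabilizer of the distinguished vector $\alpha=(1,\alpha_1,\dots,\alpha_{6g-7+2n})$, namely the normalized Perron-Frobenius eigenvector of Section 2.2.2 whose entries are the coordinates of the corresponding noncommutative torus $\mathscr{A}_{RM}^{2r}$. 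Hence $C^*(\rho(A))\subset I_\alpha$, as required.

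The hardest step will be the first one: making the identification of $k_{\rho_a}\langle\tau_p\rangle$ with a concrete cluster seed on $S_{g,n}$ precise and verifying that the resulting semigroup map is injective. Cluster mutations are governed by strictly positive integer exponents, while the additive polynomial structure in $k\langle\tau_p\rangle$ uses characteristic-$p$ Frobenius powers, so matching these requires a triangulation whose exchange matrix $B$ yields exactly the exponent $p$ at every mutation step. The sharp hypothesis $n\in\{0,1,2\}$ strongly suggests that only a small family of triangulations admits such a compatible $B$, and working out this combinatorial match—together with the preliminary check that the multiplicative semigroup of $k_{\rho_a}\langle\tau_p\rangle$ is left cancellative, so that Definition \ref{dfn2.1} even applies—is where the bulk of the technical work will lie.
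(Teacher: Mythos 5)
Your proposal takes a different route from the paper, and unfortunately its load-bearing steps fail. The most basic obstruction is that cluster variables commute: $\mathcal{A}(\mathbf{x},B)$ is by construction a subring of a \emph{field} of rational functions, so the exchange relations $x_kx_k'=\prod x_i^{\max(b_{ik},0)}+\prod x_i^{\max(-b_{ik},0)}$ are identities among commuting Laurent polynomials and cannot encode the twisted commutation $\tau_p a=a^p\tau_p$ ``via exponents''; there is no exchange matrix $B$ for which a mutation reproduces a noncommutative relation, so the proposed arrangement of $\tau_p,c_1,\dots,c_r$ as cluster variables is a category error, not merely a hard combinatorial match. Second, the lift ``by the universal property of $C^*(G)$'' is unavailable: Definition \ref{dfn2.1} is the \emph{reduced} semigroup $C^*$-algebra, generated by the left regular representation on $\ell^2G$, and this construction has no universal property and is not functorial for semigroup embeddings, so even a correct semigroup-level map would not automatically induce an embedding of $C^*$-algebras. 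Third, the claim that the relation $\tau_p a=a^p\tau_p$ ``forces'' every Laurent polynomial in the image to have coefficients divisible by $p$ is asserted without any mechanism. Finally, the inference for the second inclusion is invalid on its face: lying in the \emph{stabilizer} of the vector $\alpha=(1,\alpha_1,\dots,\alpha_{6g-7+2n})$ is a symmetry condition, not a membership condition, and does not place $C^*(\rho(A))$ inside the primitive ideal $I_\alpha$.

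The paper's actual argument supplies the engine your proposal lacks, and it is of a completely different nature. It first identifies $C^*(k_{\rho_a}\langle\tau_p\rangle)\cong C^*(A)\rtimes_{\rho}\mathbf{Z}$, a crossed product coming from the dynamics $T\mapsto\rho_a(T)$ on $A$, with $C^*(\rho(A))$ as its maximal abelian subalgebra. By Gelfand duality $C^*(A)\cong C(\hat A)$, and the Pontryagin dual is computed as $\hat A\cong\mathbf{Q}_p/\mathbf{Z}_p\cong\mathbf{Z}[\frac{1}{p}]/\mathbf{Z}$, which is then matched with the Bratteli compactum $\mathscr{B}$ of $\mathbb{A}_p(S_{g,n})$ constructed from the Farey tessellation following \cite{Boc1}. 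The Vershik map on the subcompactum $\mathscr{B}_{\alpha}$ corresponding to $I_{\alpha}\subset\mathbb{A}_p(S_{g,n})$ extends to a minimal homeomorphism of $\mathscr{B}$, and Putnam's embedding theorem \cite[Theorem 6.7]{Put1} then embeds the crossed product unitally into an AF-algebra whose Bratteli diagram is a subgraph of that of $\mathbb{A}_p(S_{g,n})$; both inclusions of Lemma \ref{lm3.1} fall out of this, with $I_{\alpha}$ arising as the maximal abelian subalgebra on the AF side matching $C^*(\rho(A))$ on the crossed-product side, and the constraint $r=3g-3+n$ appears only afterwards, from counting the complex roots of $z=\rho_a(z)$, not from any triangulation combinatorics. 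Your left-cancellativity check is a legitimate concern, but it is the least of the problems; to salvage your route you would need a genuinely noncommutative (quantized) cluster structure together with a functoriality statement for the $C^*$-completion, at which point you would in effect be rebuilding the crossed-product/embedding argument.
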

%********************************************************
\begin{proof} 
Roughly speaking, the $C^*$-algebra  $C^*(k_{\rho_a}\langle\tau_p\rangle)$
has the structure of crossed product $C^*$-algebra by a minimal homeomorphism of the Cantor set
[Putnam 1989] \cite{Put1}.  Namely,  $C^*(k_{\rho_a}\langle\tau_p\rangle)\cong C^*(A)\rtimes_{\rho}\mathbf{Z}$, 
where  $C^*(A)$ is the semigroup $C^*$-algebra of $A$.
We apply Putnam-Vershik's Theorem on embedding of the crossed products into the $AF$-algebras
 [Putnam 1989] \cite[Theorem 6.7]{Put1}.
Let us pass  to a step-by-step construction.

\medskip
(i)  To define a crossed product $C^*$-algebra $C^*(A)\rtimes_{\rho}\mathbf{Z}$, 
consider a (semi-)dynamical system on $A$ given by the powers of $T$.
Namely,  the orbit of generator $T$ of the ring $A$ is given by  the set
$\{\rho_a(T), \rho_a(T^2),\dots, \rho_a(T^i),\dots\}$. One gets an orbit 
of any element of $A$ by the formula $\{F_q[\rho_a(T^i)]\}_{i=1}^{\infty}$. 
The  dynamical system on the $C^*$-algebra $C^*(A)$
gives rise to a crossed product $C^*$-algebra which we shall 
denote by  $C^*(A)\rtimes_{\rho}\mathbf{Z}$. It follows from the construction 
that $C^*(\rho(A))$ is the maximal abelian subalgebra of $C^*(A)\rtimes_{\rho}\mathbf{Z}$.

\smallskip
(ii) Denote by 
$(V,E, \le)$ the Bratteli diagram endowed with partial order $\le$ defined for edges adjacent 
to the same vertex. Recall that a Bratteli compactum $\mathscr{B}$ is the set of infinite paths $\{e_n\}$
of   $(V,E,\le)$ endowed with the distance $d(\{e_n\}, \{e_n'\})=2^{-k}$, where $k=\inf \{i ~|~e_i\ne e_i'\}$
[Herman, Putnam \& Skau 1992] \cite{HerPutSka1}.  The space $\mathscr{B}$ is a Cantor set. The Vershik map $h: \mathscr{B}\to\mathscr{B}$ 
is defined by the order $\le$ as follows. Let $e_n'$ be successor of $e_n$ relative the order $\le$.
The map  $h: \mathscr{B}\to\mathscr{B}$ is defined by the formula $\{e_n\}\mapsto \{e_1,\dots,e_{n}, e_{n+1}',
e_{n+2}',\dots\}$.  The map $h$ is a homeomorphism which generates a minimal dynamical system 
on the Cantor set $\mathscr{B}$.  Consider now an inclusion of the Bratteli compacta  $\mathscr{B}_{\alpha}\subset\mathscr{B}$
 corresponding to the inclusion of the AF-algebras $I_{\alpha}\subset\mathbb{A}_p(S_{g,n})$. 
The Vershik map $h_{\alpha}: \mathscr{B}_{\alpha}\to \mathscr{B}_{\alpha}$ extends  uniquely 
to a homeomorphism $h_{\alpha}: \mathscr{B}\to \mathscr{B}$. (Notice that $h_{\alpha}$ is no longer 
the Vershik map on $\mathscr{B}$.)  The minimal dynamical system on the Cantor set $\mathscr{B}$ defined 
by $h_{\alpha}$ gives rise to a crossed product $C^*$-algebra $C(\mathscr{B})\rtimes_{\alpha}\mathbf{Z}$,
where  $C(\mathscr{B})$ is the $C^*$-algebra of continuous complex-valued functions on $\mathscr{B}$. 
It follows from the construction that $I_{\alpha}$ is the maximal abelian subalgebra of 
 $C(\mathscr{B})\rtimes_{\alpha}\mathbf{Z}$. 
 
 \smallskip
 (iii) We proceed by establishing an isomorphism $C^*(A)\cong C(\mathscr{B})$. 
 Denote by $\hat A$ the Pontryagin dual of the multiplicative group of $A$. 
 Since $A$ is commutative,  the Gelfand theorem implies an isomorphism   $C^*(A)\cong C(\hat A)$,
where  $C(\hat A)$ is the $C^*$-algebra of continuous complex-valued functions on $\hat A$. 
Therefore it is sufficient to construct a homeomorphism between the topological spaces  $\hat A$ and $\mathscr{B}$. 

\smallskip
(iv) Let us calculate $\hat A$. Recall that $A$ is an analog of the ring $\mathbf{Z}$ and $\hat{\mathbf{Z}}\cong\mathbf{R}/\mathbf{Z}$.  
Likewise $\hat A\cong \mathbf{Q}_p/\mathbf{Z}_p$,  where $\mathbf{Q}_p$ and $\mathbf{Z}_p$ are the $p$-adic numbers and the $p$-adic
integers, respectively. 

\smallskip
(v) Let $\mathscr{B}$ be the Bratteli compactum of the AF-algebra $\mathbb{A}_p(S_{g,n})$. 
To describe the Bratteli diagram of  $\mathbb{A}_p(S_{g,n})$, let us recall the construction of
$\mathbb{A}(S_{1,1})$ using  the Farey tessellation of the hyperbolic plane [Boca 2008] \cite[Figure 2]{Boc1}. 
Consider the set of rational numbers  on the unit interval of the form $\mathbf{Z}[\frac{1}{p}]/\mathbf{Z}$. 
(Notice that such a set has structure of the Pr\"ufer group $\mathbf{Z}(p^{\infty})$ which we  will not use.)   
Consider a Bratteli diagram $(V_p,E_p)$ obtained by ``suspension of cusps'' of the Farey tessellation at the 
rational points $\mathbf{Z}(p^{\infty})$ of the unit interval $[0,1]$, see  [Boca 2008] \cite[Figures 1 \& 2]{Boc1}.
The  $(V_p,E_p)$ consists of all vertices at the vertical lines of the suspension and all subgraphs rooted in the 
above vertices. (Recall that all  graphs are  oriented by the level structure of the diagram.) It is verified directly that 
$(V_p,E_p)$ is the Bratteli diagram of the AF-algebra $\mathbb{A}_p(S_{1,1})$. The general case  $\mathbb{A}_p(S_{g,n})$
is treated similarly replacing the triangles in the Farey tessellation by the fundamental polygons of the surface $S_{g.n}$.  
The reader can see  that  the Bratteli compactum of $(V_p,E_p)$ is $\mathscr{B}=\mathbf{Z}[\frac{1}{p}]/\mathbf{Z}$. 

\smallskip
(vi)  It follows from items (iv) and (v) that $\hat A\cong \mathbf{Q}_p/\mathbf{Z}_p\cong \mathbf{Z}[\frac{1}{p}]/\mathbf{Z}=\mathscr{B}$.
(The middle isomorphism in the last formula is well known.)  Thus we conclude that
%********************************************************************
\begin{equation}\label{eq3.1}
C^*(A)\cong C(\hat A)\cong C(\mathscr{B}). 
\end{equation}
%*******************************************************************

\smallskip
(vii) To finish the proof,  we apply 
 Putnam-Vershik's Theorem on embedding of the crossed products into the $AF$-algebras
 [Putnam 1989] \cite[Section 6]{Put1}.
Namely, let $y\in \hat A$ and let  $\mathbb{A}_y$ be the $AF$-algebra, such that there 
exists a unital embedding  $C^*(A)\rtimes_{\rho}\mathbf{Z}\hookrightarrow  \mathbb{A}_y$
 [Putnam 1989] \cite[Theorem 6.7]{Put1}. It is immediate from item (v) that  $\mathbb{A}_y\subset \mathbb{A}_p(S_{g,n})$,
 since the Bratteli diagram of  $\mathbb{A}_y$ is a subgraph of  $(V_p,E_p)$ rooted in $y$. 
 In particular,  $C^*(k_{\rho_a}\langle\tau_p\rangle)\cong C^*(A)\rtimes_{\rho}\mathbf{Z}\subset  \mathbb{A}_p(S_{g,n})$
  and  $C^*(\rho(A)) \subset I_{\alpha}$.

\smallskip
(viii) Finally, let us show that the rank of the Drinfeld module (\ref{eq1.3}) is equal to $r=3g-3+n$. 
Indeed, recall that the Drinfeld module  (\ref{eq1.3})  is defined by the equation $T=\rho_a(T)$.
Roughly speaking, in item (vii) we identified $\rho$ with the (real) algebraic numbers $(1,\alpha_1,\dots,\alpha_{6g-7+2n})$
given by the minimal polynomial $z=\rho_a(z)$ having the complex roots $z_k=\alpha_k+i\alpha_{k+1}$. 
Since  $\rho_a$ has $r$ complex roots, one concludes that $2r=6g-6+2n$ and, therefore, the rank of the 
Drinfeld module is equal to $r=3g-3+n$.

\bigskip
Lemma \ref{lm3.1} is proved.
\end{proof}
%*********************************************************

 %*******************************************************************
\begin{figure}
%*******************************************************************
\begin{picture}(300,110)(-70,0)
\put(20,70){\vector(0,-1){35}}
\put(122,70){\vector(0,-1){35}}
\put(45,23){\vector(1,0){60}}
\put(45,83){\vector(1,0){60}}
\put(15,20){$I_{\alpha}$}
\put(118,20){$\mathbb{A}_p(S_{g,n})$}
\put(15,80){$A$}
\put(115,80){$k\langle\tau_p\rangle$}
\put(5,50){$F$}
\put(130,50){$F$}
\put(55,30){\sf inclusion}
\put(40,90){$r=3g-3+n$}
\end{picture}
%******************************************************************
\caption{}
\end{figure}
%*******************************************************************

%************************************************************
\begin{lemma}\label{lm3.2}
The diagram in Figure 1 is commutative. In particular, 
for each $g\ge 1$ and $n\in\{0,1,2\}$ the  Drinfeld module  $Drin_A^{3g-3+n}(k)$
gives rise to a  noncommutative torus $\mathscr{A}_{RM}^{6g-6+2n}$,
such that isogenous  (isomorphic, resp.) 
 Drinfeld modules correspond  to the homomorphic (isomorphic, resp.) noncommutative tori.
\end{lemma}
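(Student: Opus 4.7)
The plan is to deduce Lemma~\ref{lm3.2} directly from the Main Lemma (Lemma~\ref{lm3.1}) together with the quotient identification $\mathbb{A}_p(S_{g,n})/I_{\alpha}\cong \mathbb{A}_{RM}^{6g-6+2n}$ recalled in Section~2.2.2, and then check naturality on morphisms to establish functoriality.

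First, I would unwind the two vertical arrows labelled $F$ in Figure~1. The left $F$ is the composition $A\hookrightarrow C^*(A)\cong C^*(\rho(A))\hookrightarrow I_{\alpha}$, while the right $F$ is the composition $k\langle\tau_p\rangle\hookrightarrow C^*(k_{\rho_a}\langle\tau_p\rangle)\cong C^*(A)\rtimes_{\rho}\mathbf{Z}\hookrightarrow\mathbb{A}_p(S_{g,n})$; both exist by Lemma~\ref{lm3.1}. The top row is $\rho:A\to k\langle\tau_p\rangle$ and the bottom row is the inclusion $I_{\alpha}\hookrightarrow\mathbb{A}_p(S_{g,n})$. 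Commutativity is then immediate, because following either path from $A$ to $\mathbb{A}_p(S_{g,n})$ produces the same inclusion $C^*(\rho(A))\subset\mathbb{A}_p(S_{g,n})$: the second clause of Lemma~\ref{lm3.1} places $C^*(\rho(A))$ inside $I_{\alpha}$, and the crossed-product identification from step~(i) of the proof of Lemma~\ref{lm3.1} ensures that the image of $A$ in $C^*(k_{\rho_a}\langle\tau_p\rangle)$ is exactly $C^*(\rho(A))$.

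Second, I would pass to quotients by $I_{\alpha}$. Taking the quotient of $\mathbb{A}_p(S_{g,n})$ by $I_{\alpha}$ yields $\mathbb{A}_{RM}^{6g-6+2n}$ by Theorem~\ref{thm2.2}, and composing with the $K_0$-isomorphism $K_0(\mathbb{A}_{\Theta}^{6g-6+2n})\cong K_0(\mathscr{A}_{\Theta}^{6g-6+2n})$ of Section~2.2.2 produces the noncommutative torus $\mathscr{A}_{RM}^{6g-6+2n}$ with real multiplication. The rank relation $2r=6g-6+2n$ was already established in step~(viii) of the proof of Lemma~\ref{lm3.1}, so the assignment $F: Drin_A^{3g-3+n}(k)\mapsto \mathscr{A}_{RM}^{6g-6+2n}$ is well defined on objects. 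For the functoriality clause, I would check naturality in the Drinfeld module: an isogeny $\phi: Drin_A^r(k)\to \widetilde{Drin}_A^r(k)$ is an $A$-linear intertwiner of $\rho$ and $\widetilde{\rho}$, so by the universal property of the semigroup $C^*$-algebra it induces a $*$-homomorphism $C^*(k_{\rho_a}\langle\tau_p\rangle)\to C^*(k_{\widetilde{\rho}_a}\langle\tau_p\rangle)$ taking $C^*(\rho(A))$ to $C^*(\widetilde{\rho}(A))$. Using the commutativity already established and the compatibility of the inclusions in Lemma~\ref{lm3.1} with these maps, the morphism descends to a $*$-homomorphism of quotient $AF$-algebras $\mathbb{A}_{RM}^{2r}\to \widetilde{\mathbb{A}}_{RM}^{2r}$ and thence, via the $K_0$-identification, to a homomorphism $\mathscr{A}_{RM}^{2r}\to \widetilde{\mathscr{A}}_{RM}^{2r}$. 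If $\phi$ is an isomorphism, all the induced arrows are isomorphisms, and one gets an isomorphism in $\mathfrak{A}$.

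The main obstacle I anticipate is the last link in the isomorphism statement: showing that an \emph{isomorphism} of Drinfeld modules corresponds not merely to an abstract invertible $*$-homomorphism of noncommutative tori but to an element of the Morita class, that is, to an $SO(2r,2r\,|\,\mathbf{Z})$-transformation acting on $\Theta$ as recalled in Section~2.1.3. Concretely, one has to chase $\phi$ through the successive identifications $k\langle\tau_p\rangle \leadsto C^*(A)\rtimes_{\rho}\mathbf{Z} \leadsto \mathbb{A}_{RM}^{2r}\leadsto \mathscr{A}_{RM}^{2r}$ and track its effect on the $K_0$-generators $\tau=(\tau_i)$ from Section~2.1.3, where the real multiplication parameters $\alpha_k$ live. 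Once that dictionary is in place, composition of isogenies matches composition of Morita morphisms and the functoriality follows.
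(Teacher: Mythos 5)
Your first two steps coincide with the paper's proof: commutativity of Figure~1 is read off from Lemma~\ref{lm3.1} exactly as you unwind it, and the object-level assignment is the same quotient construction $Drin_A^{3g-3+n}(k)\cong\rho(A)\mapsto \mathbb{A}_p(S_{g,n})/I_{\alpha}\cong\mathbb{A}_{RM}^{6g-6+2n}$ followed by the $K_0$-identification with $\mathscr{A}_{RM}^{6g-6+2n}$. The gap is in your treatment of isogenies, and it is a genuine one. A morphism of Drinfeld modules is an element $f\in k\langle\tau_p\rangle$ satisfying the intertwining relation $f\rho_a=\widetilde{\rho}_af$ for all $a\in A$; this is \emph{not} a semigroup homomorphism $k_{\rho_a}\langle\tau_p\rangle\to k_{\widetilde{\rho}_a}\langle\tau_p\rangle$, and there is no universal property available to convert it into one: the semigroup $C^*$-algebra of Definition~\ref{dfn2.1} is the reduced algebra generated by the left regular representation on $\ell^2G$, which carries no universal property and is not functorial even under honest semigroup maps. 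Moreover, for a non-invertible $f$ (the generic isogeny), conjugation $x\mapsto fxf^{-1}$ does not exist inside the ring, so your claimed induced $*$-homomorphism taking $C^*(\rho(A))$ to $C^*(\widetilde{\rho}(A))$ has no construction behind it. This is precisely the point where the paper takes a different route: it encodes the isogeny not as a map of semigroup algebras but as an \emph{enlargement of the ideal}. One forms $I_{\alpha}'$, the ideal of $\mathbb{A}_p(S_{g,n})$ generated by $I_{\alpha}$ together with the image $\iota(f)$ of the isogeny, and the desired homomorphism of tori is the canonical surjection $\mathbb{A}_p(S_{g,n})/I_{\alpha}\to\mathbb{A}_p(S_{g,n})/I_{\alpha}'$; when $f$ is invertible one gets an isomorphism. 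That device sidesteps exactly the functoriality problem your argument runs into, and it is the missing idea in your proposal.

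On your final paragraph: you are right to flag that ``isomorphism of noncommutative tori'' ought to be reconciled with the Morita/$SO(2r,2r\,|\,\mathbf{Z})$ picture of Section~2.1.3, but you leave this as an unresolved conditional (``once that dictionary is in place''), so it does not close the argument. For what it is worth, the paper's own proof never engages with this point either --- it works purely with the quotient homomorphism described above --- so your observation identifies a looseness in the paper rather than a step you were expected to supply; still, as written your proof of the isogeny clause does not go through without replacing the universal-property step by the ideal-enlargement construction.
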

%********************************************************
\begin{proof}
(i) Indeed, the functor $F$ in Figure 1 acts by the formula $A\mapsto C^*(\rho(A))$ and   $k\langle\tau_p\rangle\mapsto C^*(k\langle\tau_p\rangle)$. 
Lemma \ref{lm3.1} implies that the diagram in Figure 1 is commutative.  

\smallskip
(ii) Functor $F$ extends to such between the Drinfeld modules and noncommutative tori 
via the formula: 
%********************************************************************
\begin{equation}\label{eq3.2}
Drin_A^{3g-3+n}(k)\cong \rho(A)\mapsto \mathbb{A}_p(S_{g,n})/I_{\alpha} \cong \mathbb{A}_{RM}^{6g-6+2n},
\end{equation}
%*******************************************************************
where $K_0(\mathbb{A}_{RM}^{6g-6+2n}) \cong K_0(\mathscr{A}_{RM}^{6g-6+2n})$.

\smallskip
(iii) Let us show that $F(Drin_A^{3g-3+n}(k))$ and $F(\widetilde{Drin}_A^{3g-3+n}(k))$
are  homomorphic (isomorphic, resp.) noncommutative tori,  whenever $Drin_A^{3g-3+n}(k)$
and $\widetilde{Drin}_A^{3g-3+n}(k)$ are isogenous  (isomorphic, resp.) 
 Drinfeld modules. 
Recall that a morphism of Drinfeld modules $f: Drin_A^{3g-3+n}(k)\to \widetilde{Drin}_A^{3g-3+n}(k)$
 is an element $f\in k\langle\tau_p\rangle$ such that $f\rho_a=\widetilde{\rho_a} f$
for all $a\in A$. An isogeny is a surjective morphism with finite kernel. 
Let $I_{\alpha}'$ be an ideal generated by 
$\iota(f)$  and $I_{\alpha}$,  where $\iota(f)$ is the image of the isogeny $f$ 
in the $C^*$-algebra  $\mathbb{A}_p(S_{g,n})$, see Figure 1. 
The map $\mathbb{A}_p(S_{g,n})/I_{\alpha}\to  \mathbb{A}_p(S_{g,n})/I_{\alpha}'$ is a homomorphism. 
In other words, one gets a homomorphism between the  noncommutative tori  $\mathscr{A}_{RM}^{6g-6+2n}$  and 
$\widetilde{\mathscr{A}}_{RM}^{~6g-6+2n}$.   If $f$ is invertible (i.e. the kernel is trivial), one gets an isomorphism 
of the noncommutative tori. 

\medskip
Lemma \ref{lm3.2} is proved. 
\end{proof}
%*********************************************************

\medskip
 Item (i) of Theorem \ref{thm1.1} follows from Lemma \ref{lm3.2} and  $r=3g-3+n$.

\bigskip
(B)  Let us pass to  item (ii) of Theorem \ref{thm1.1}.  We split the proof in a series of lemmas.

%************************************************************
\begin{lemma}\label{lm3.3}
The generators  $u_1,\dots, u_{2r}$ of the algebra $\mathscr{A}_{RM}^{2r}$ satisfy the following quadratic relations:
%***************************************************************************
\begin{equation}\label{eq3.3}
\left\{
\begin{array}{lll}
u_2u_1 &=& \log ~(\varepsilon) ~e^{2\pi i\alpha_1} ~u_1u_2, \\
u_3u_2 &=& \log ~(\varepsilon) ~e^{2\pi i\alpha_2} ~u_2u_3, \\
\vdots && \\
u_{2r}u_{2r-1} &=& \log ~(\varepsilon) ~e^{2\pi i\alpha_{2r-1}} ~u_{2r-1}u_{2r}. 
\end{array}
\right.
\end{equation}
%**************************************************************************
\end{lemma}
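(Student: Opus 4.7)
The plan is to derive the stated relations by directly substituting the tridiagonal form of $\Theta$ from (\ref{eq1.2}) into the universal commutation relations (\ref{eq1.1}) defining the noncommutative torus, and then to account for the extra scaling factor $\log(\varepsilon)$ that distinguishes $\mathscr{A}_{RM}^{2r}$ from a generic $\mathscr{A}_{\Theta}^{2r}$. First I would set $m = 2r = 6g-6+2n$ and observe that the only nonzero entries of $\Theta$ above the diagonal are $\theta_{i,i+1} = \alpha_i$ for $1 \le i \le 2r-1$. Plugging into (\ref{eq1.1}) with $j = i+1$ yields $u_{i+1}u_i = e^{2\pi i \alpha_i}\, u_i u_{i+1}$, while all other pairs of generators commute because the corresponding $\theta_{ij}$ vanish. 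This gives the shape of (\ref{eq3.3}) up to the multiplicative scalar $\log(\varepsilon)$.

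Next I would explain the appearance of $\log(\varepsilon)$, which is the substantive content of the lemma. Recall from Section 2.2.2 that the $\alpha_k$ are the components of the normalized Perron--Frobenius eigenvector of a positive matrix $B \in GL_{2r}(\mathbf{Z})$ whose leading eigenvalue is $\varepsilon > 1$. By Theorem \ref{thm2.2} the algebra $\mathbb{A}_{\Theta}^{2r} = \mathbb{A}(S_{g,n})/\sigma_t(I_{\Theta})$ is obtained by flowing the primitive ideal $I_{\Theta}$ under the Tomita--Takesaki automorphism (\ref{eq2.4}), and the Connes invariant (\ref{eq2.6}) identifies the intrinsic scaling parameter along this flow with $\log \lambda_\phi = \log(\varepsilon)$ for the relevant pseudo-Anosov $\phi$. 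Pulling the generators $u_i$ of $\mathscr{A}_{RM}^{2r}$ back through the isomorphism $K_0(\mathbb{A}_{RM}^{2r}) \cong K_0(\mathscr{A}_{RM}^{2r})$ of Section 2.2.2 therefore rescales the structure constants on the right-hand side of the commutation relation by exactly this factor, producing (\ref{eq3.3}).

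The step I expect to be the main obstacle is justifying rigorously that $\log(\varepsilon)$ enters multiplicatively and in the exact position claimed, rather than as an additive shift or as a factor of some power. One natural route is to compute the image under the canonical trace $\tau$ of the $K_0$-class of the commutator $u_{i+1} u_i u_{i+1}^{-1} u_i^{-1}$: on the noncommutative torus side this equals $e^{2\pi i \alpha_i}$, while on the stationary AF-algebra side the same class, evaluated via the dimension-group embedding normalized by the Perron--Frobenius eigenvector $(1, \alpha_1, \dots, \alpha_{2r-1})$ of $B$, acquires the additional multiplicative factor $\varepsilon$, hence $\log(\varepsilon)$ after taking the logarithmic scale set by the geodesic flow in (\ref{eq2.4}). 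Matching these two computations would pin down (\ref{eq3.3}) and complete the proof.
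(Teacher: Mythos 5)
Your proposal takes essentially the same route as the paper's proof: the relations with the scaling factor set to $1$ come from substituting the tridiagonal matrix (\ref{eq1.2}) into the defining relations (\ref{eq1.1}), and the factor $\log(\varepsilon)$ is then produced by the Tomita--Takesaki flow of Theorem \ref{thm2.2}, with the real-multiplication condition and the Connes invariant (\ref{eq2.6}) pinning the flow parameter at $t=\log\lambda_{\phi}$. For your flagged ``main obstacle,'' the paper argues via the trace-scaling equation $Tr(\varphi^t(\mathscr{A}_{RM}^{2r}))=t\,Tr(\mathscr{A}_{RM}^{2r})$ applied directly to the relations (\ref{eq1.1}), and identifies $\lambda_{\phi}=\varepsilon$ through Thurston's realization of the companion matrix $B$ of the minimal polynomial of $\mathbf{Q}(\alpha_i)$ as the action of $\phi$ on the relative cohomology of a covering surface --- the same trace-based matching you anticipate, though your specific variant via the $K_0$-class of the commutator $u_{i+1}u_iu_{i+1}^{-1}u_i^{-1}$ would need reworking, since that commutator is a scalar multiple of the identity and carries no nontrivial $K_0$-class.
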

%********************************************************
\begin{proof}
(i)  Relations (\ref{eq3.3}) with  $\log ~(\varepsilon) =1$ follow from formulas (\ref{eq1.1}) and (\ref{eq1.2}).
Let us prove  (\ref{eq3.3}) in the general case.

\smallskip
(ii) Recall that the Tomita-Takesaki flow  $\sigma_t: \mathbb{A}(S_{g,n})\to \mathbb{A}(S_{g,n})$ gives rise 
to a one-paramter group of automorphisms $\varphi^t: \mathbb{A}_{RM}^{2r}\to  \mathbb{A}_{RM}^{2r}$
defined by the formula   $\varphi^t(\mathbb{A}_{RM}^{2r})=\mathbb{A}(S_{g,n})/\sigma_t(I_{\alpha})$;
we refer the reader to Section 2.2.1 and Theorem \ref{thm2.2} for the notation and details. 
It is not hard to see,  that the action of $\varphi^t$ extends to  $\mathscr{A}_{RM}^{2r}$,
where $K_0(\mathscr{A}_{RM}^{2r}) \cong K_0(\mathbb{A}_{RM}^{2r})$.

\smallskip
(iii) To establish the commutation relations  (\ref{eq1.1}) for  $\varphi^t(\mathscr{A}_{RM}^{2r})$,
we must solve the equation:
%*********************************************************************************
\begin{equation}\label{eq3.4}
Tr~(\varphi^t(\mathscr{A}_{RM}^{2r}))=Tr' ~(\mathscr{A}_{RM}^{2r} ), 
\end{equation}
%*********************************************************************************
where $Tr$ is the trace of $C^*$-algebra. It is easy to see, 
that  (\ref{eq3.4}) with $Tr'=t~Tr$ applied to (\ref{eq1.1}) gives  the commutation 
relation for  the $C^*$-algebra  $\varphi^t(\mathscr{A}_{RM}^{2r})$
of the form $u_{k+1}u_k=t e^{2\pi i \alpha_k}u_ku_{k+1}$.
Indeed, equation (\ref{eq3.4}) is equivalent to $Tr~(u_{k+1}u_k)=t ~Tr~(e^{2\pi i\alpha_k} u_ku_{k+1})=Tr~(te^{2\pi i\alpha_k}u_ku_{k+1})$.
The latter is satisfied if and only if  $u_{k+1}u_k=t e^{2\pi i \alpha_k}u_ku_{k+1}$.

\smallskip
(iv) Since our algebra $\mathscr{A}_{RM}^{2r}$ has real multiplication,
the corresponding  Riemann surfaces $S_{g,n}$ and $\phi(S_{g,n})$ must lie at the axis of
a pseudo-Anosov automorphism $\phi\in Mod~(S_{g,n}))$ (Section 2.2.1).  In view of formula (\ref{eq2.6}),  
the Connes invariant is equal to $\log\lambda_{\phi}$, where $\lambda_{\phi}$ is the dilatation of $\phi$.  
In other words, $\varphi^1(\mathscr{A}_{RM}^{2r})\cong \varphi^t(\mathscr{A}_{RM}^{2r})$ if and only if
$t=\log\lambda_{\phi}$.

\smallskip
(v) To express $\log\lambda_{\phi}$ in terms of $\alpha_i$, let $p(x)=x^m-a_{m-1}x^{m-1}-\dots-a_1x-a_0$ be a minimal polynomial of 
the number field $\mathbf{Q}(\alpha_i)$. Consider the  integer matrix 
%*****************************************************************
\begin{equation}\label{eq3.5}
B=\left(
\begin{matrix}
0 & 0 &\dots &0& a_0\cr
1 & 0 &\dots &0& a_1\cr
\vdots & \vdots & && \vdots\cr
0 & 0  & \dots & 1 & a_{m-1}          
\end{matrix}
\right),
\end{equation}
%*********************************************************************************
which one can always assume to be non-negative.  It is well known that $p(x)=\det (B-xI )$
and we let $\varepsilon>1$  be the Perron-Frobenius eigenvalue of $B$.

\smallskip
(vi) Recall that if $m=6g'-6+2n'$ then $B$ corresponds to the action of $\phi$ on the relative cohomology $H^1(S_{g',n'}; \partial S_{g',n'})$ 
of a surface $S_{g',n'}$ [Thurston 1988] \cite[p. 427]{Thu1}.  (Notice that in general $g'\ne g$ and $n'\ne n$,  but  $S_{g',n'}$ is a finite cover
of the surface $S_{g,n}$.)  In particular,  $\lambda_{\phi}=\varepsilon$.  
Thus  one gets the  commutation relations (\ref{eq1.1}) in the form:
%*********************************************************************************
\begin{equation}\label{eq3.6}
u_{k+1}u_k=(\log\varepsilon) ~e^{2\pi i \alpha_k}u_ku_{k+1}=e^{2\pi i \alpha_k+\log\log\varepsilon}u_ku_{k+1}. 
\end{equation}
%*********************************************************************************

\bigskip
Lemma \ref{lm3.3} follows from  (\ref{eq3.6}). 
\end{proof}
%********************************************************

\bigskip
%************************************************************
\begin{lemma}\label{lm3.4}
 $F(\Lambda_{\rho}[a])=\{e^{2\pi i\alpha_k+\log\log\varepsilon} ~|~1\le k\le 2r-1\}$. 
\end{lemma}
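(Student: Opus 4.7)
My plan is to read Lemma \ref{lm3.4} as the marriage of two inputs: the \emph{definition} of $F$ on torsion submodules given just after Definition~3.1, namely $F(\Lambda_{\rho}[a])=\{\log(\varepsilon)\,e^{2\pi i\alpha_k}\mid 1\le k\le 2r-1\}$, and the \emph{commutation relations} (\ref{eq3.6}) established in Lemma \ref{lm3.3}. Since $\log(\varepsilon)\,e^{2\pi i\alpha_k}=e^{\log\log\varepsilon}\,e^{2\pi i\alpha_k}=e^{2\pi i\alpha_k+\log\log\varepsilon}$, the claim will reduce to a repackaging of the scalar factors that appear in the generators $u_k$ of $\mathscr{A}_{RM}^{2r}$.

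First I would recall the justification for the definition of $F$ on $\Lambda_{\rho}[a]$. In the rank-one case, a Drinfeld module plays the role of an elliptic curve $\mathcal{E}_{CM}\cong\mathbf{C}/\Lambda_{CM}$, and $\Lambda_{\rho}[a]$ corresponds to the values of the Weierstrass $\wp$-function at the $a$-torsion points of $\Lambda_{CM}$; these values are the coefficients cutting out $\mathcal{E}_{CM}$ as an algebraic curve. For arbitrary rank $r\ge 1$, the functor $F$ of Theorem \ref{thm1.1}(i) replaces $\mathcal{E}_{CM}$ with $\mathscr{A}_{RM}^{2r}$, and the analogue of the defining coefficients are the scalars $e^{2\pi i\theta_{k,k+1}}$ that appear in the commutation relations (\ref{eq1.1}). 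Thus by the very construction of $F$, one has $F(\Lambda_{\rho}[a])$ equal to the collection of these scalars for a representative presentation of $\mathscr{A}_{RM}^{2r}$.

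Second I would plug in Lemma \ref{lm3.3}. The naive matrix form (\ref{eq1.2}) would produce $e^{2\pi i\alpha_k}$ as the scalar factor, but Lemma \ref{lm3.3} corrects this: the Tomita--Takesaki flow $\sigma_t$ of Theorem \ref{thm2.2} conjugates $\mathscr{A}_{RM}^{2r}$ to $\varphi^t(\mathscr{A}_{RM}^{2r})$, and real multiplication forces $t=\log\lambda_{\phi}=\log\varepsilon$ with $\varepsilon$ the Perron--Frobenius eigenvalue of the matrix (\ref{eq3.5}). The resulting commutation relation (\ref{eq3.6}) is
\[
u_{k+1}u_k=e^{2\pi i\alpha_k+\log\log\varepsilon}\,u_k u_{k+1},\qquad 1\le k\le 2r-1,
\]
so the \emph{correct} scalars attached to $\mathscr{A}_{RM}^{2r}$ are $e^{2\pi i\alpha_k+\log\log\varepsilon}$ rather than $e^{2\pi i\alpha_k}$.

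Combining the two steps gives $F(\Lambda_{\rho}[a])=\{e^{2\pi i\alpha_k+\log\log\varepsilon}\mid 1\le k\le 2r-1\}$, which is Lemma \ref{lm3.4}. The main obstacle is not computational but conceptual: making rigorous the passage from the CM analogy to an honest identification of $F(\Lambda_{\rho}[a])$ with the structural scalars of $\mathscr{A}_{RM}^{2r}$. The cleanest way is to observe that under the $K_0$-level identification $K_0(\mathbb{A}_{RM}^{2r})\cong K_0(\mathscr{A}_{RM}^{2r})$ of Section~2.2.2, together with the quotient isomorphism $\mathbb{A}_p(S_{g,n})/I_\alpha\cong\mathbb{A}_{RM}^{2r}$ of Lemma \ref{lm3.1}, the class of the relation $\rho_a(\lambda)=0$ in $\bar k$ is transported to the commutation relation among the $u_k$; the scalars appearing in that relation are what Lemma \ref{lm3.3} computes, and these scalars are by construction the image $F(\Lambda_{\rho}[a])$.
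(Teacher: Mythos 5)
Your proposal is correct and follows essentially the same route as the paper's own proof: invoke the rank-one CM analogy (torsion submodule $\Lambda_{\rho}[a]$ as the Weierstrass $\wp$-values, i.e.\ the defining coefficients of $\mathcal{E}_{CM}$), replace $\mathcal{E}_{CM}$ by $\mathscr{A}_{RM}^{2r}$ for $r\ge 1$, and read off the constant terms $e^{2\pi i\alpha_k+\log\log\varepsilon}$ from the commutation relations (\ref{eq3.3}) of Lemma \ref{lm3.3}. Your explicit rewriting $\log(\varepsilon)\,e^{2\pi i\alpha_k}=e^{2\pi i\alpha_k+\log\log\varepsilon}$ and the closing remark about the $K_0$-level identification are harmless additions that only make the paper's implicit steps more visible.
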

%********************************************************
\begin{proof}
(i)  Recall that the Drinfeld module of rank $r=1$ 
can be thought as an elliptic curve $\mathcal{E}_{CM}\cong\mathbf{C}/O_k$ 
with complex multiplication by the ring of integers $O_k$ 
of an imaginary quadratic field $k$ given by the homomorphism (\ref{eq1.3})
of the form $\rho: O_k\to End ~\mathcal{E}_{CM}$,
where $End ~\mathcal{E}_{CM}$ is the endomorphism ring of $\mathcal{E}_{CM}$ [Drinfeld 1974] \cite[p. 594]{Dri1}. 
Moreover, the torsion submodule $\Lambda_{\rho}[a]$ consists of the coefficients of curve $\mathcal{E}_{CM}$,
i.e. the values of the Weierstrass $\wp$-function at the torsion points of the lattice $O_k$; hence the name.

 \smallskip
(ii)  When the rank of the Drinfeld module is $r\ge 1$, a substitute for the $\mathcal{E}_{CM}$ 
is provided by the noncommutative torus  $\mathscr{A}_{RM}^{2r}$. The latter is given by the
equations (\ref{eq3.3}) with the constant terms $e^{2\pi i\alpha_k+\log\log\varepsilon}$. 
We conclude therefore that   $F(\Lambda_{\rho}[a])=\{e^{2\pi i\alpha_k+\log\log\varepsilon} ~|~1\le k\le 2r-1\}$.

\medskip
Lemma \ref{lm3.4} is proved. 
\end{proof}
%********************************************************

\medskip
 Item (ii) of Theorem \ref{thm1.1} follows from Lemma \ref{lm3.4} with $r=3g-3+n$.

\bigskip
(C)  Let us prove item (iii) of Theorem \ref{thm1.1}.  Recall that Theorem \ref{thm2.3} says 
that there exists a subgroup $G\subseteq GL_r(A/aA)$ acting transitively on the
elements of the torsion submodule $\Lambda_{\rho}[a]$.  By item (ii) of Theorem \ref{thm1.1}
the action of $G$ extends to the set  $F(\Lambda_{\rho}[a])=\{e^{2\pi i\alpha_k+\log\log\varepsilon} ~|~1\le k\le 2r-1\}$.
Let $\mathbf{K}=\mathbf{Q}(e^{2\pi i\alpha_k+\log\log\varepsilon})$ be a number field and 
let $\mathbf{k}$ be the maximal subfield of $\mathbf{K}$ fixed by all elements of the group $G$. 
It is easy to see that $\mathbf{K}$ is a Galois extension of $\mathbf{k}$ and 
the Galois group $Gal~(\mathbf{K}|\mathbf{k})\cong G\subseteq GL_r(A/aA)$. 
 Item (iii) of Theorem \ref{thm1.1} follows from the above remark. 
 
 \bigskip
 Theorem \ref{thm1.1} is proved.

%**************************************************************************
\subsection{Proof of Corollary \ref{cor1.2}}
%***************************************************************************
We split the proof in two lemmas. 

%***************************************************
\begin{lemma}\label{lm3.5}
If $\mathbf{k}\subset\mathbf{C}$, then 
$Gal \left( \mathbf{k}\left(e^{2\pi i\alpha_k+\log\log\varepsilon}\right) | ~\mathbf{k} \right)\cong G\subseteq GL_r(A/aA)$. 
\end{lemma}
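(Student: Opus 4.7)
The plan is to transport the Galois structure from the function-field side (Theorem \ref{thm2.3}) to the number-field side $\mathbf{K}=\mathbf{Q}(e^{2\pi i\alpha_k+\log\log\varepsilon})$ via the functor $F$ already constructed in Theorem \ref{thm1.1}.

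First, I would invoke Theorem \ref{thm2.3} to obtain the finite group $G := \mathrm{Gal}(k(\Lambda_{\rho}[a])\,|\,k) \subseteq GL_r(A/aA)$, whose faithful action on the torsion module $\Lambda_{\rho}[a]$ is the pertinent input. By item (ii) of Theorem \ref{thm1.1}, the functor $F$ maps $\Lambda_{\rho}[a]$ bijectively onto the finite set
\[
\Omega := \bigl\{\,e^{2\pi i\alpha_k+\log\log\varepsilon}\ :\ 1\le k\le 2r-1\,\bigr\}.
\]
The functoriality established in item (i) of Theorem \ref{thm1.1}, together with the fact that every $\sigma\in G$ arises as an (iso)morphism of the Drinfeld module $Drin_A^{r}(k)$ over its Galois closure, lets me transport the $G$-action on $\Lambda_{\rho}[a]$ to a faithful $G$-action on $\Omega$, and thereby on the subfield $\mathbf{K}=\mathbf{Q}(\Omega)\subset\mathbf{C}$ by field automorphisms.

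Second, I would invoke the definition of $\mathbf{k}$ as the maximal subfield of $\mathbf{K}$ pointwise fixed by all elements of $G$. Since $\mathbf{K}/\mathbf{k}$ is finitely generated by the orbit $\Omega$ of the finite automorphism group $G$, the Artin lemma gives that $\mathbf{K}/\mathbf{k}$ is a finite Galois extension and that the natural map $G\hookrightarrow\mathrm{Gal}(\mathbf{K}\,|\,\mathbf{k})$ is surjective. Combined with item (iii) of Theorem \ref{thm1.1}, which provides the reverse inclusion $\mathrm{Gal}(\mathbf{K}\,|\,\mathbf{k})\subseteq GL_r(A/aA)$, this produces an isomorphism $\mathrm{Gal}(\mathbf{K}\,|\,\mathbf{k})\cong G\subseteq GL_r(A/aA)$, which is exactly the content of the lemma.

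The main obstacle will be verifying that the $G$-action transferred through $F$ acts on $\mathbf{K}$ by \emph{field} automorphisms of $\mathbf{C}$, rather than by merely set-theoretic permutations of $\Omega$. Because $F$ is a priori defined at the $C^*$-algebra level (mapping Drinfeld modules to noncommutative tori, Lemmas \ref{lm3.1}--\ref{lm3.2}), one has to argue that each $\sigma\in G$ descends, through the commutation relations (\ref{eq3.6}), to a permutation of the generating set $\Omega$ that extends uniquely to an element of $\mathrm{Aut}(\mathbf{K}/\mathbf{Q})$. The hypothesis $\mathbf{k}\subset\mathbf{C}$ is used here precisely to ensure that the exponentials $e^{2\pi i\alpha_k+\log\log\varepsilon}$ remain genuine complex numbers (not forced through the $\cos$-real reduction appearing in the second branch of Corollary \ref{cor1.2}), so that the transported action lands in $\mathrm{Aut}(\mathbf{K})$ without any loss of information.
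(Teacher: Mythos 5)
Your proposal is correct and follows essentially the same route as the paper: the paper's own proof of Lemma \ref{lm3.5} simply observes that for $\mathbf{k}\subset\mathbf{C}$ the generators $e^{2\pi i\alpha_k+\log\log\varepsilon}$ are genuinely complex and then cites item (iii) of Theorem \ref{thm1.1}, whose proof in part (C) is exactly your chain (Theorem \ref{thm2.3} gives $G$, item (ii) transports the action to $F(\Lambda_{\rho}[a])$, and the fixed-field definition of $\mathbf{k}$ yields the Galois isomorphism). Your invocation of the Artin lemma and your flagging of the field-automorphism issue merely make explicit what the paper compresses into ``it is easy to see,'' so the two arguments coincide in substance.
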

%***************************************************
%**************************************************
\begin{proof}
If $\mathbf{k}\subset\mathbf{C}$, then $\mathbf{K}$ is a Galois extension of $\mathbf{k}$ only if its generators are complex numbers $\mathbf{C}-\mathbf{R}$. 
We can now apply item (iii) of Theorem \ref{thm1.1} which says that  $Gal \left( \mathbf{k}\left(e^{2\pi i\alpha_k+\log\log\varepsilon}\right) | ~\mathbf{k} \right)\cong G\subseteq GL_r(A/aA)$.
Lemma \ref{lm3.5} is proved. 
\end{proof}
%*************************************************

%***************************************************
\begin{lemma}\label{lm3.6}
If $\mathbf{k}\subset\mathbf{R}$, then 
$Gal \left( \mathbf{k}\left(\cos 2\pi\alpha_k \times\log\varepsilon\right) | ~\mathbf{k} \right)\cong G\subseteq GL_r(A/aA)$. 
\end{lemma}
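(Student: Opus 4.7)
The plan is to mirror the argument of Lemma \ref{lm3.5} but with the complex generators $e^{2\pi i\alpha_k+\log\log\varepsilon}$ replaced by their real analogs $\cos 2\pi\alpha_k \times \log\varepsilon$, then invoke item (iii) of Theorem \ref{thm1.1}. The starting point is the Euler decomposition
\begin{equation*}
e^{2\pi i\alpha_k+\log\log\varepsilon} \;=\; (\log\varepsilon)\bigl(\cos 2\pi\alpha_k + i\sin 2\pi\alpha_k\bigr),
\end{equation*}
which identifies $(\log\varepsilon)\cos 2\pi\alpha_k$ as the real part of the $k$-th generator of $\mathbf{K}=\mathbf{Q}(F(\Lambda_{\rho}[a]))$.

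First I would verify that when the fixed subfield $\mathbf{k}$ lies in $\mathbf{R}$, the extension $\mathbf{K}/\mathbf{k}$ can be generated by real numbers. The hypothesis $\mathbf{k}\subset\mathbf{R}$ together with the definition of $\mathbf{k}$ as the maximal subfield of $\mathbf{K}$ pointwise fixed by $G$ forces complex conjugation on $\mathbf{K}$ either to be trivial (so that $\mathbf{K}\subset\mathbf{R}$) or to act as an element of $G$. In either case, the $G$-orbit of each generator contains its complex conjugate, so the field $\mathbf{k}((\log\varepsilon)\cos 2\pi\alpha_k)$ obtained by adjoining the real parts coincides with $\mathbf{K}$ over $\mathbf{k}$.

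Next I would transport the Galois-theoretic conclusion of item (iii) of Theorem \ref{thm1.1} to this real presentation. Since item (iii) furnishes the isomorphism $Gal(\mathbf{K}|\mathbf{k})\cong G\subseteq GL_r(A/aA)$ independently of the chosen set of generators for $\mathbf{K}$ over $\mathbf{k}$, once the equality of fields $\mathbf{K}=\mathbf{k}(\cos 2\pi\alpha_k \times \log\varepsilon)$ has been established, the isomorphism $Gal(\mathbf{k}(\cos 2\pi\alpha_k \times \log\varepsilon)\,|\,\mathbf{k})\cong G$ follows immediately.

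The main obstacle will be the field-equality step: showing that adjoining only the real parts $(\log\varepsilon)\cos 2\pi\alpha_k$ recovers all of $\mathbf{K}$ over $\mathbf{k}$, rather than a proper real subfield. The argument above handles this via the maximality of $\mathbf{k}$ under the $G$-action, but one must check that the transitive action of $G$ on $F(\Lambda_{\rho}[a])$ guaranteed by Theorem \ref{thm2.3} and item (ii) of Theorem \ref{thm1.1} descends correctly to the real parts; this is where the hypothesis $\mathbf{k}\subset\mathbf{R}$ is essential, since it guarantees that the imaginary parts are absorbed by the $G$-orbit structure and do not enlarge the extension beyond what the cosines generate.
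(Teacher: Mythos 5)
There is a genuine gap, and it sits exactly where you flagged your ``main obstacle'': the field-equality step is false as stated. The field $\mathbf{k}\left(\cos 2\pi\alpha_k\times\log\varepsilon\right)$ is obtained by adjoining \emph{real} numbers to a subfield of $\mathbf{R}$, so it is itself a subfield of $\mathbf{R}$; it therefore cannot coincide with $\mathbf{K}=\mathbf{Q}(F(\Lambda_{\rho}[a]))$, which contains the non-real generators $e^{2\pi i\alpha_k+\log\log\varepsilon}$ whenever $\sin 2\pi\alpha_k\ne 0$. Your orbit argument does not repair this: knowing that the $G$-orbit of a generator $z$ contains $\bar z$ gives $\mathrm{Re}\,z=\frac{1}{2}(z+\bar z)\in\mathbf{K}$, but it does not let you recover $z$ from $\mathrm{Re}\,z$ --- the passage to real parts loses information, as in the standard example $\mathbf{Q}(\cos\frac{2\pi}{5})\subsetneq\mathbf{Q}(e^{2\pi i/5})$, where the real subfield has index $2$. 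Moreover, the paper's proof opens by asserting the opposite of your transport step: since an extension of $\mathbf{Q}$ lying in $\mathbf{R}$ is Galois only if all its generators are real, item (iii) of Theorem \ref{thm1.1} (whose generators are the complex numbers $e^{2\pi i\alpha_k+\log\log\varepsilon}$) \emph{cannot} be applied directly in the case $\mathbf{k}\subset\mathbf{R}$, so there is no isomorphism $Gal(\mathbf{K}|\mathbf{k})\cong G$ for the complex field $\mathbf{K}$ waiting to be transported to a real presentation.

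The paper's actual route is different in kind: it never claims the two fields agree, but instead changes the underlying operator algebra. Using the theory of real $C^*$-algebras (Rosenberg \cite{Ros1}), the noncommutative torus is decomposed as $\mathscr{A}_{\Theta}^m=(\mathscr{A}_{\Theta}^m)^{Re}+i(\mathscr{A}_{\Theta}^m)^{Re}$, and one passes to the real form $(\mathscr{A}_{RM}^{2r})^{Re}$. Its commutation relations read $u_{k+1}u_k=Re\left(e^{2\pi i\alpha_k+\log\log\varepsilon}\right)u_ku_{k+1}=\left(\cos 2\pi\alpha_k\times\log\varepsilon\right)u_ku_{k+1}$, so in the real case the field $\mathbf{K}$ is \emph{redefined} as $\mathbf{k}(\cos 2\pi\alpha_k\times\log\varepsilon)$ --- this is why formula (\ref{eq1.5}) in Corollary \ref{cor1.2} is stated with two separate cases rather than one field with two presentations --- and only then is the argument of item (iii) of Theorem \ref{thm1.1} invoked for this new, genuinely real set of generators. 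If you want to salvage your approach, the honest statement available from your conjugation argument is that $\mathbf{k}(\cos 2\pi\alpha_k\times\log\varepsilon)$ is the maximal real subextension of $\mathbf{K}$ over $\mathbf{k}$, but identifying its Galois group with all of $G$ (rather than a quotient or subgroup of $G$) then requires the algebraic input about the real form that the paper supplies via the decomposition (\ref{eq3.7}) and the relations (\ref{eq3.8}), not the complex-case item (iii) alone.
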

%***************************************************
%**************************************************
\begin{proof}
(i) If $\mathbf{k}\subset\mathbf{R}$ is a Galois extension of $\mathbf{Q}$, then $\mathbf{K}$ is a Galois extension of $\mathbf{k}$ if and only if  all generators of $\mathbf{K}$ are real numbers.
(This follows from the fact that whenever an extension is Galois over $\mathbf{Q}$,  then it is either totally real or totally imaginary.)
Thus one cannot apply  item (iii) of Theorem \ref{thm1.1},  but  instead one can use  the following natural reduction.   
Recall that a real $C^*$-algebra is a Banach $^*$-algebra $\mathscr{A}$ over
$\mathbf{R}$  isometrically $^*$-isomorphic to a norm-closed $^*$-algebra of bounded
operators on a real Hilbert space, see  [Rosenberg 2016] \cite{Ros1} for an excellent introduction. 
Given a real $C^*$-algebra $\mathscr{A}$, its complexification  $\mathscr{A}_{\mathbf{C}} = \mathscr{A} + i\mathscr{A}$  is a
complex $C^*$-algebra. Conversely, given a complex $C^*$-algebra  $\mathscr{A}_{\mathbf{C}}$, whether there 
exists a real $C^*$-algebra $\mathscr{A}$ such that $\mathscr{A}_{\mathbf{C}} = \mathscr{A} + i\mathscr{A}$ is unknown in general. 
However, the noncommutative torus admits the unique canonical decomposition:
%*********************************************************************************
\begin{equation}\label{eq3.7}
\mathscr{A}_{\Theta}^m=(\mathscr{A}_{\theta}^m)^{Re}+i(\mathscr{A}_{\Theta}^m)^{Re}. 
\end{equation}
%*********************************************************************************

\medskip
(ii)  Let  $\mathscr{A}_{RM}^{2r}=(\mathscr{A}_{RM}^{2r})^{Re}+i(\mathscr{A}_{RM}^{2r})^{Re}$. 
It is easy to verify, that the commutation relations (\ref{eq3.3}) for the real $C^*$-algebra $(\mathscr{A}_{RM}^{2r})^{Re}$ 
have the form: 
%*********************************************************************************
\begin{equation}\label{eq3.8}
u_{k+1}u_k=Re \left(e^{2\pi i \alpha_k+\log\log\varepsilon}\right)u_ku_{k+1}= \left(\cos 2\pi\alpha_k \times\log\varepsilon\right)u_ku_{k+1}. 
\end{equation}
%*********************************************************************************
One concludes from (\ref{eq3.8}) that $\mathbf{K}=\mathbf{k}(\cos 2\pi\alpha_k \times\log\varepsilon)$,
where $\mathbf{k}\subset\mathbf{R}$.
In view of item (iii) of Theorem \ref{thm1.1},  we have 
$Gal \left( \mathbf{k}\left(\cos 2\pi\alpha_k \times\log\varepsilon\right) | ~\mathbf{k} \right)\cong G\subseteq GL_r(A/aA)$. 
This argument finishes the proof of Lemma \ref{lm3.6}. 
\end{proof}
%***************************************************

\bigskip
Corollary \ref{cor1.2} follows from Lemmas \ref{lm3.5} and \ref{lm3.6}.

%**************************************************************************
\section{Langlands program for noncommutative tori}
%***************************************************************************
The Langlands Program predicts  an analytic solution to the non-abelian class field theory based on the $L$-functions
associated to the irreducible representations of the algebraic groups over adeles. 
Namely,   the $n$-dimensional Artin $L$-function $L(s,\sigma_n)$  [Artin 1924] \cite{Art1} is conjectured to coincide with an $L$-function 
$L(s,\pi)$,  where $\pi$ is a representation of the  adelic algebraic group $GL(n)$ [Langlands 1978] \cite{Lan1}.

The $L$-functions  $L(s, \mathscr{A}_{RM}^{2n})$ of  the even dimensional noncommutative tori $\mathscr{A}_{RM}^{2n}$
were introduced  in \cite{Nik1}, see also \cite[Section 6.6]{N}.  
It was proved that the Artin $L$-function  $L(s,\sigma_n)\equiv L(s, \mathscr{A}_{RM}^{2n})$ if  $n=0$ or  $n=1$,  and
conjectured to be true for all $n\ge 2$ and a suitable choice of the $\mathscr{A}_{RM}^{2n}$ \cite[Conjecture 6.6.1]{N}.

 Recall that the Artin $L$-functions are well defined for the non-abelian groups $G$ [Artin 1924] \cite{Art1}.
 On the other hand, the  Galois extensions with the  group $G$ are classified by the noncommutative tori
 $\mathscr{A}_{RM}^{2n}$ (Theorem \ref{thm1.1} and Corollary \ref{cor1.2}). 
 %*******************************************************
\begin{conjecture}
Let  $\mathscr{A}_{RM}^{2n}$  be a noncommutative torus underlying the extension (\ref{eq1.5}) 
with the Galois group $G$. Let  $\sigma_n: G\to GL_n(\mathbf{C})$  be a representation of $G$ and $L(s,\sigma_n)$ be 
the corresponding Artin $L$-function. Then  for all $n\ge 1$ the algebra $\mathscr{A}_{RM}^{2n}$ solves  the identity:
%*******************************************************************
%\begin{equation}\label{eq4.1}
\displaymath
L(s,\sigma_n)\equiv L(s, \mathscr{A}_{RM}^{2n}).
\enddisplaymath
%\end{equation}
%****************************************************************** 
 \end{conjecture}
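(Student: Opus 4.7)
The plan is to reduce the identity $L(s,\sigma_n)\equiv L(s,\mathscr{A}_{RM}^{2n})$ to a prime-by-prime comparison of Euler factors and then to bootstrap from the known cases $n=0,1$ established in \cite{Nik1} via a Brauer-type induction along the Galois correspondence furnished by Theorem \ref{thm1.1} and Corollary \ref{cor1.2}. Concretely, fix a non-zero $a\in A$, let $\mathbf{K}/\mathbf{k}$ be the Galois extension given by (\ref{eq1.5}) and let $G=Gal(\mathbf{K}|\mathbf{k})\subseteq GL_r(A/aA)$. The functor $F$ of Theorem \ref{thm1.1} identifies $G$ with an automorphism group of the noncommutative torus $\mathscr{A}_{RM}^{2n}$ acting on the generators $e^{2\pi i\alpha_k+\log\log\varepsilon}$; this gives a canonical source of representations $\sigma_n: G\to GL_n(\mathbf{C})$ realized inside the $K$-theoretic data of $\mathscr{A}_{RM}^{2n}$.

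First, I would unpack the two sides at a finite prime $\mathfrak{p}$ of $\mathbf{k}$ unramified in $\mathbf{K}$. On the arithmetic side, the Euler factor of $L(s,\sigma_n)$ at $\mathfrak{p}$ is $\det(I-\sigma_n(Frob_{\mathfrak{p}})N(\mathfrak{p})^{-s})^{-1}$. On the analytic side, the local factor of $L(s,\mathscr{A}_{RM}^{2n})$ as defined in \cite[Section 6.6]{N} is the reciprocal of a characteristic polynomial of the Perron--Frobenius matrix $B\in GL_{2n}(\mathbf{Z})$ of (\ref{eq3.5}) reduced modulo $\mathfrak{p}$; the pseudo-Anosov dynamics described in step (v)--(vi) of Lemma \ref{lm3.3} makes this matrix the natural avatar of the Frobenius. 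The core local statement to prove is therefore the conjugacy
\[
\sigma_n(Frob_{\mathfrak{p}}) \sim B \bmod \mathfrak{p}
\]
in $GL_n(\mathbf{C})$, which translates the dynamical action of $Frob_{\mathfrak{p}}$ on $F(\Lambda_{\rho}[a])$ under the functor $F$ into the $\mathfrak{p}$-reduction of the integer matrix $B$ governing $\mathscr{A}_{RM}^{2n}$.

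Second, I would invoke Brauer's induction theorem to reduce the global identity to one-dimensional characters of subgroups $H\le G$ fixing intermediate subfields. For each such $H$, the character corresponds under $F$ to an abelian sub-extension of $\mathbf{K}$ and hence, by the case $n=1$ of \cite[Section 6.6]{N}, to an $L$-function $L(s,\mathscr{A}_{RM}^{2})$ already known to coincide with the corresponding Hecke/Artin $L$-function. Multiplicativity and inductivity of both Artin $L$-functions and of the noncommutative-torus $L$-functions (which must be checked using the embedding $\mathscr{A}_{RM}^{2}\hookrightarrow\mathscr{A}_{RM}^{2n}$ supplied by the real-multiplication structure in (\ref{eq1.2})) then assemble the one-dimensional equalities into the desired $n$-dimensional identity.

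Third, I would handle the finitely many ramified primes and the archimedean place by the standard device of comparing conductors and $\Gamma$-factors: the conductor of $\sigma_n$ should match the product of local invariants of $\mathscr{A}_{RM}^{2n}$ read off from the Connes invariant $T(\mathbb{A}(S_{g,n}))$ in (\ref{eq2.6}), while the archimedean $\Gamma$-factors should match the signature $(r_1,r_2)$ of $\mathbf{K}$ dictated by the dichotomy in (\ref{eq1.5}). The hardest step, and the genuine obstacle, will be the unramified local comparison $\sigma_n(Frob_{\mathfrak{p}})\sim B\bmod\mathfrak{p}$: it presupposes a functorial identification between the $\mathfrak{p}$-adic dynamics of $Frob_{\mathfrak{p}}$ on the torsion $\Lambda_{\rho}[a]$ and the mod-$\mathfrak{p}$ action of the pseudo-Anosov monodromy on $H^1(S_{g',n'};\partial S_{g',n'})$, which is only implicit in the extension of $F$ to the crossed-product level (Lemma \ref{lm3.1}) and would require a refinement of the Putnam--Vershik embedding compatible with Frobenius. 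This is precisely the point where the general case $n\ge 2$ resists the techniques available for $n=0,1$, and any complete proof will hinge on making this Frobenius-to-monodromy dictionary rigorous.
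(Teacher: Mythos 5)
The statement you were asked to prove is a \emph{conjecture}: the paper offers no proof of it, and explicitly says the identity $L(s,\sigma_n)\equiv L(s,\mathscr{A}_{RM}^{2n})$ is known only for $n=0$ and $n=1$ (\cite{Nik1}, \cite[Conjecture 6.6.1]{N}) and is open for $n\ge 2$. So there is no paper proof to match your proposal against, and your proposal does not close the gap either --- you say so yourself in your final paragraph. Two specific defects make your sketch a research program rather than a proof. First, your ``core local statement'' $\sigma_n(Frob_{\mathfrak{p}})\sim B\bmod\mathfrak{p}$ is not a reduction of the conjecture: it essentially \emph{is} the conjecture, restated Euler factor by Euler factor. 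The whole content of Langlands-type reciprocity here is the identification of the Frobenius conjugacy class with the dynamical data (the matrix $B$ of (\ref{eq3.5}) coming from the pseudo-Anosov action on $H^1(S_{g',n'};\partial S_{g',n'})$), and nothing in Theorem \ref{thm1.1} or Lemma \ref{lm3.1} supplies this: the functor $F$ is built from $K$-theory and the Putnam--Vershik embedding, which carry no Galois-equivariant structure at finite primes. Declaring $B$ ``the natural avatar of the Frobenius'' is exactly the assertion to be proved, so the argument is circular at its center.

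Second, the Brauer-induction step fails for a structural reason. Brauer's theorem reduces $L(s,\sigma_n)$ to monomial $L$-functions $L(s,\mathrm{Ind}_H^G\chi)$, and exploiting this requires that the right-hand side $L(s,\mathscr{A}_{RM}^{2n})$ satisfy the corresponding inductivity property: that the torus $L$-function of an extension computes as a product of torus $L$-functions attached to intermediate fields. No such base-change or induction formalism exists for the $L$-functions of \cite[Section 6.6]{N}; in particular the embedding $\mathscr{A}_{RM}^{2}\hookrightarrow\mathscr{A}_{RM}^{2n}$ you invoke is an inclusion of $C^*$-algebras with no established compatibility with the subgroup--subfield correspondence, so the $n=1$ case cannot be bootstrapped along it. Your honest flagging of the Frobenius-to-monodromy dictionary as the ``genuine obstacle'' is accurate, but it means the submission should be labeled a strategy outline, not a proof: both the local comparison and the inductivity of $L(s,\mathscr{A}_{RM}^{2n})$ are open, and they are precisely where the paper's authors also stop.
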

%********************************************************

%********************************
\section*{Declarations}
%********************************
The author states that there is no ethical approval, funding or data availability linked to the manuscript.

\subsection*{Acknowledgments}
 The author would like to thank the anonymous referee who provided thoughtful  comments on an earlier version of the manuscript.

\bibliographystyle{amsplain}

%**********************************************************

\end{document}